\documentclass[14pt, draft]{article}
\usepackage[cp1251]{inputenc}
\usepackage[ukrainian, russian, english]{babel}

\textheight=23.76true cm   % высота текста
\textwidth=17true cm    % ширина текста                                                                                                    %

\topmargin=-8true mm

\oddsidemargin=0.42cm

\evensidemargin=0.26cm

\pagestyle{headings}                                                        %
\usepackage{ amsfonts, amssymb}
\usepackage{amsmath}
\usepackage{amsthm}

\sloppy

\newtheorem{theorem}{Теорема}

\newtheorem{lemma}{Лема}
\newtheorem{corollary}{Наслідок}

\theoremstyle{definition}

\begin{document}

\selectlanguage{ukrainian} \thispagestyle{empty}
 \pagestyle{myheadings}              %%%%%%%%%%%%%%%%%%%%% <--------------
%%%%%%%%%%%%%%%%%%%%%%%%%%%%%%%%%%%%%%%%%%%%%%%%%%%%%%%%%%%%%%%%%%%%%%%%%%%%%
%%%%%%%%%%%%%%%%%%%%%%%%%%%%%%%%%%%%%%%%%%%%%%%%%%%%%%%%%%%%%%%%%%%%%%%%%%%%%
%%%%%%%%%%%%%%%%%%%%%%%%%%%%%%%%%%%%%%%%%%%%%%%%%%%%%%%%%%%%%%%%%%%%%%%%%%%%%

\pagestyle{myheadings}              %%%%%%%%%%%%%%%%%%%%% <--------------
%%%%%%%%%%%%%%%%%%%%%%%%%%%%%%%%%%%%%%%%%%%%%%%%%%%%%%%%%%%%%%%%%%%%%%%%%%%%%
%%%%%%%%%%%%%%%%%%%%%%%%%%%%%%%%%%%%%%%%%%%%%%%%%%%%%%%%%%%%%%%%%%%%%%%%%%%%%
%%%%%%%%%%%%%%%%%%%%%%%%%%%%%%%%%%%%%%%%%%%%%%%%%%%%%%%%%%%%%%%%%%%%%%%%%%%%%

%{\small УДК 517.5} \vskip 3mm
УДК 517.5 \vskip 3mm

\noindent \bf А.С. Сердюк  \rm (Інститут математики НАН України, Київ) \\
\noindent \bf Т.А. Степанюк  \rm (Інститут математики НАН України, Київ)

\noindent {\bf A.S. Serdyuk} (Institute of Mathematics NAS of
Ukraine, Kyiv) \\
 \noindent {\bf T.A. Stepaniuk} (Institute of Mathematics NAS of
Ukraine, Kyiv)\\

 \vskip 5mm

{\bf Оцінки наближень інтерполяційними тригонометричними поліномами на класах згорток перiодичних функцiй високої гладкостi }

\vskip 5mm

{\bf Estimates of approximations  by interpolation trigonometric polynomials  on the classes of convolutions of high smoothness
}

\vskip 5mm

 \rm Встановлено  інтерполяційні аналоги нерівностей типу Лебега на множинах $C^{\psi}_{\beta}L_{1}$ $2\pi$--періодичних функцій $f$, котрі задаються згортками твірного ядра $\Psi_{\beta}(t)=\sum\limits_{k=1}^{\infty}\psi(k)\cos
\big(kt-\frac{\beta\pi}{2}\big)$, $\psi(k)\geq 0$, $\sum\limits_{k=1}^{\infty}\psi(k)<\infty$, $\beta\in\mathbb{R}$,
з функціями $\varphi$ із $L_{1}$.  В зазначених нерівностях при кожному $x\in\mathbb{R}$  модулі відхилень  $|f(x)- \tilde{S}_{n-1}(f;x)|$ інтерполяційних поліномів Лагранжа $ \tilde{S}_{n-1}(f;\cdot)$ оцінюються через найкращі наближення $E_{n}(\varphi)_{L_{1}}$ функцій $\varphi$ тригонометричними поліномами в $L_{1}$--метриках.
Коли послідовності $\psi(k)$ спадають до нуля швидше за довільну степеневу функцію, отримані нерівності в ряді важливих випадків є асимптотично точними. В таких випадках встановлено також асимптотичні рівності для точних верхніх меж поточкових наближень інтерполяційними тригонометричними поліномами  на класах згорток твірного ядра $\Psi_{\beta}$ із функціями $\varphi$, що належать одиничній кулі з простору $L_{1}$.
 
%В метриках пространств $L_{s}, \ 1\leq s\leq\infty$, найдены асимптотические равенства для  верхних граней приближений суммами Фурье  на классах
 % обобщенных интегралов Пуассона периодических функций, принадлежащих единичному %шару  пространства $L_{1}$.

\vskip 5mm

 \rm We establish interpolation analogues of Lebesgue type inequalities on the sets of $C^{\psi}_{\beta}L_{1}$ $2\pi$--periodic functions $f$, which are representable as convolutions of generating kernel  $\Psi_{\beta}(t)=\sum\limits_{k=1}^{\infty}\psi(k)\cos
\big(kt-\frac{\beta\pi}{2}\big)$, $\psi(k)\geq 0$, $\sum\limits_{k=1}^{\infty}\psi(k)<\infty$, $\beta\in\mathbb{R}$,
with functions $\varphi$ from $L_{1}$ .  In obtained inequalities for each  $x\in\mathbb{R}$ the modules of deviations   $|f(x)- \tilde{S}_{n-1}(f;x)|$ of interpolation Lagrange polynomials  $ \tilde{S}_{n-1}(f;\cdot)$  are estimated via best approximations $E_{n}(\varphi)_{L_{1}}$ of functions $\varphi$ by trigonometric polynomials in $L_{1}$--metrics. 
When the sequences  $\psi(k)$ decrease to zero faster than any power function, the obtained inequalities in many important cases are asymptotically exact. In such cases we also establish the asymptotic equalities for exact upper boundaries of pointwise approximations by interpolation trigonometric polynomials on the classes of convolutions of generating kernel $\Psi_{\beta}$ with functions $\varphi$, which belong to the unit ball from the space $L_{1}$.
 
\newpage

%%%%%%%%%%%%%%%%%%%%%%%%%%%%%%
%%%%%%%%%%%%%%%%%%%%%%%%%%%%%%%%%%

\section{Вступ}

Нехай $L_{p}$,
$1\leq p<\infty$, простір $2\pi$--періодичних сумовних у  $p$-му степені на
 $[0,2\pi)$ функцій $f$, з нормою
\begin{equation*}
\|f\|_{L_p}=\|f\|_{p}:=\Big(\int\limits_{0}^{2\pi}|f(t)|^{p}dt\Big)^{\frac{1}{p}};
\end{equation*}
 $L_{\infty}$ --- простір вимірних і суттєво обмежених   $2\pi$--періодичних функцій $f$ з нормою
 \begin{equation*}
\|f\|_{\infty}=\mathop{\rm{ess}\sup}\limits_{t}|f(t)|;
\end{equation*}
$C$ --- простір неперервних $2\pi$--періодичних функцій  $f$ з нормою
\begin{equation*}
{\|f\|_{C}=\max\limits_{t}|f(t)|}.
\end{equation*}

%Assume that $f$ is a $2\pi$--periodic function, summable on $[0,2\pi)$ with the Fourier series
%\begin{equation}\label{FourierSer}
%S[f]=\frac{a_{0}}{2}+\sum\limits_{k=1}^{\infty}\left( a_{k}\cos kx +b_{k}\sin kx \right)
%=\sum\limits_{k=0}^{\infty}A_{k}(f,x)
 %\end{equation}
 
Нехай $\psi(k)$ --- довільна фіксована послідовність  невід'ємних дійсних чисел, і  нехай  $\beta$ --- фіксоване дійсне число. 
Позначимо через $C^{\psi}_{\beta}L_{p}$ множину $2\pi$--періодичних функцій, які при всіх $x\in\mathbb{R}$ зображуються у вигляді згортки
\begin{equation}\label{conv}
f(x)=\frac{a_{0}}{2}+\frac{1}{\pi}\int\limits_{-\pi}^{\pi} \Psi_{\beta}(x-t)\varphi(t)dt,
\ a_{0}\in\mathbb{R}, \ \varphi\in L_{p}, \  \varphi\perp1 \
\end{equation}
з твірним ядром $\Psi_{\beta}$ вигляду
\begin{equation}\label{kernelPsi}
\Psi_{\beta}(t)=\sum\limits_{k=1}^{\infty}\psi(k)\cos
\big(kt-\frac{\beta\pi}{2}\big), \ \psi(k)\geq 0, \  \beta\in
    \mathbb{R},
\end{equation}
таким, що
 \begin{equation}\label{condition}
\sum\limits_{k=1}^{\infty}\psi(k)<\infty.
\end{equation}

Якщо функції $f$ і $\varphi$ пов'язані рівністю (\ref{conv}), то функцію $f$ в цьому співвідношенні називають $(\psi,\beta)$--похідною функції $f$ і позначають через $f^{\psi}_{\beta}$. З іншого боку функцію $f$ у рівності  (\ref{conv}) називають $(\psi, \beta)$--інтегралом функції $\varphi$ і позначають через $\mathcal{J}^{\psi}_{\beta}\varphi$. Поняття $(\psi, \beta)$--похідної  ($(\psi, \beta)$--інтеграла), які означалися в дещо інших термінах (за допомогою мультиплікаторів та зсувів аргументу), були введені О.І. Степанцем (див., наприклад, \cite{Stepanets1986_1}, \cite{Step monog 1987}, \cite{Stepanets1}).
Незважаючи на відмінність підходів до означення $(\psi,\beta)$--інтегралів та $(\psi,\beta)$--похідних у даній статті, порівняно, наприклад, із \cite{Stepanets1}, зазначимо, що за виконання \eqref{kernelPsi} i \eqref{condition} $(\psi,\beta)$--інтеграли $\mathcal{J}^{\psi}_{\beta}\varphi$ для будь-якої $\varphi\in L_{1}$ можуть відрізнятися між собою в залежності від вибору означення лише на множині міри нуль; аналогічно за виконання \eqref{kernelPsi} i \eqref{condition} і додаткової умови $\psi(k)>0$, $(\psi,\beta)$-- похідні $f^{\psi}_{\beta}$ для будь--якої функції $f$ вигляду \eqref{conv} можуть відрізнятися лише нам множині нульової міри.

Підмножину функцій $f$ з $C^{\psi}_{\beta}L_{p}$ таких, що $f^{\psi}_{\beta}\in B_{p}$,
де $B_{p}$ --- одинична куля в просторі $L_{p}$, тобто
\begin{equation*}
B_{p}:=\left\{\varphi: \ ||\varphi||_{p}\leq 1\right\},
\end{equation*}
будемо позначати через c $C^{\psi}_{\beta,p}$. Зрозуміло, що умова (\ref{condition}) гарантує неперервність твірного ядра $\Psi_{\beta}(t)$ вигляду (\ref{kernelPsi}), а отже і істинність вкладення 
 $C^{\psi}_{\beta}L_{p}\subset C \ \ (C^{\psi}_{\beta,p}\subset C)$.
 
 Якщо $\psi(k)=k^{-r}$, $r>0$, ядра $\Psi_{\beta}(t)$ є відомими ядрами Вейля--Надя $B_{r,\beta}(t)$ вигляду 
\begin{equation}\label{kernelWeylNady}
B_{r, \beta}(t)=\sum\limits_{k=1}^{\infty} k^{-r}\cos
\big(kt-\frac{\beta\pi}{2}\big),  \ r>0, \ \beta\in\mathbb{R}.
\end{equation} 

При цьому $(\psi,\beta)$--похідні $f^{\psi}_{\beta}$ майже скрізь збігаються з $(r,\beta)$--похідними $f^{r}_{\beta}$ в сенсі Вейля--Надя, а відповідні класи $C^{\psi}_{\beta,p}$ позначають через  $W^{r}_{\beta,p}$ (див. \cite[ $\S$ 6-8, Гл. 3]{Stepanets1}). Коли $r\in\mathbb{N}$, $r=\beta$, класи $W^{r}_{\beta,p}$ є відомими класами $W^{r}_{p}$   $2\pi$--періодичних функцій $f$, що мають абсолютно неперервні похідні $f^{(k)}$ до $(r-1)$--го порядку включно і для яких $f^{(r)}\in B_{p}$. В зазначеному випадку 
 $f^{r}_{\beta}$ майже скрізь збігаються з $f^{(r)}$.

У випадку, коли $\psi(k)=e^{-\alpha k^{r}}$, $\alpha>0$, $r>0$, ядра $\Psi_{\beta}(t)$ вигляду (\ref{kernelPsi}) є узагальненими ядрами Пуассона, тобто $\Psi_{\beta}(t)=P_{\alpha,r,\beta}(t)$, де
\begin{equation}\label{kernelPsi_GeneralizedPoisson}
P_{\alpha, r, \beta}(t)=\sum\limits_{k=1}^{\infty} e^{-\alpha k^{r}}\cos
\big(kt-\frac{\beta\pi}{2}\big), \ \alpha> 0, \ r>0, \ \beta\in\mathbb{R}.
\end{equation}
Породжені такими ядрами множини $C^{\psi}_{\beta}L_{p}$ та $C^{\psi}_{\beta,p}$ позначатимемо відповідно через  $C^{\alpha,r}_{\beta}L_{p}$ та  $C^{\alpha,r}_{\beta,p}$ і
називатемо множинами узагальнених інтегралів Пуассона, а $(\psi,\beta)$--похідні $f^{\psi}_{\beta}$  та $(\psi,\beta)$--інтеграли  $\mathcal{J}^{\psi}_{\beta}\varphi$ позначатимемо через $f^{\alpha,r}_{\beta}$  та  $\mathcal{J}^{\alpha,r}_{\beta}\varphi$ відповідно.

Для довільних $r>0$ множини $C^{\alpha,r}_{\beta}L_{p}$ та $C^{\alpha,r}_{\beta,p}$  є підмножинами множини $2\pi$--періодичних нескінченно диференційовних функцій $D^{\infty}$, тобто $C^{\alpha,r}_{\beta,p}\subset C^{\alpha,r}_{\beta}L_{p}  \subset D^{\infty}$ (див., наприклад, \cite{Stepanets1},  \cite{Stepanets_Serdyuk_Shydlich2007}). При $r=1$ множини $C^{\alpha,r}_{\beta}L_{p} \,(C^{\alpha,r}_{\beta,p})$ є множинами  інтегралів Пуассона гармонічних функцій і складаються з $2\pi$--періодичних аналітичних функцій, що допускають регулярне продовження  у смугу $|\mathrm{Im} \, z|< \alpha$ комплексної площини (див., наприклад, \cite[c. 141]{Stepanets1}). При $r>1$  множини $C^{\alpha,r}_{\beta}L_{p} (C^{\alpha,r}_{\beta,p})$ є множинами  $2\pi$--періодичних цілих функцій  (див., наприклад,   \cite[c. 142]{Stepanets1}).
Зв'язок між множинами $C^{\alpha,r}_{\beta}L_{p}$ і відомими класами Жевре  
 \begin{equation*}
J_{a}=\left\{f\in D^{\infty}: \sup\limits_{k\in\mathbb{N}}\left( \frac{\|f^{(k)} \|_{C}}{(k!)^{a}}\right)^{1/k}<\infty \right\}, \ \ a>0,
\end{equation*}
вивчався у роботі \cite{Stepanets_Serdyuk_Shydlich2009}.

Нами досліджуються апроксимативні властивості множин $C^{\psi}_{\beta}L_{1} $ та $C^{\psi}_{\beta,1} $ коли у ролі агрегатів наближення виступають класичні інтерполяційні тригонометричні поліноми Лагранжа, що задані непарним числом рівномірно розподілених вузлів інтерполяції.

Для будь--якої функції $f(x)$ із $ C$ через $\widetilde{S}_{n-1}(f;x)$, $n\in\mathbb{N}$, будемо позначати тригонометричний поліном порядку $n-1$, що інтерполює $f(x)$ у вузлах $x_{k}^{(n-1)}=\frac{2k\pi}{2n-1}$, $k\in\mathbb{Z}$, тобто такий, що
\begin{equation}\label{InterpolationPolynomS}
\tilde{S}_{n-1}(f;x_{k}^{(n-1)})=f(x_{k}^{(n-1)}), \ k = 0,1, ..., 2n-2.
\end{equation}

Поліном  $\widetilde{S}_{n-1}(f;\cdot)$ однозначно задається інтерполяційними умовами \eqref{InterpolationPolynomS}, називається інтерполяційним поліномом Лагранжа і може бути зображений в явному вигляді через ядро Діріхлє
\begin{equation*}
D_{n-1}(t)=\frac{1}{2}+\sum\limits_{k=1}^{n-1}\cos kt = \frac{\sin (n-\frac{1}{2})t }{2\sin \frac{t}{2}}
\end{equation*}
наступним чином:
\begin{equation}\label{InterpolationPolynomS_Dirichlet}
\tilde{S}_{n-1}(f;x)= \frac{2}{2n-1}\sum\limits_{k=0}^{2n-2}f(x_{k}^{(n-1)}) D_{n-1}(x-x_{k}^{(n-1)}).
\end{equation}

Нехай $\mathcal{T}_{2n-1}$ --- простір усіх тригонометричних поліномів $t_{n-1}$ порядку $n-1$ і $E_{n}(f)_{L_{p}}$ --- найкраще наближення функції $f\in L_{p}$, $1\leq p\leq \infty$, в $L_{p}$--метриці тригонометричними поліномами $t_{n-1}\in \mathcal{T}_{2n-1}$, тобто величина
\begin{equation*}
E_{n}(f)_{L_{p}}=\inf\limits_{t_{n-1}\in \mathcal{T}_{2n-1}}\|f-t_{n-1}\|_{p}, 
\end{equation*}
а $E_{n}(f)_{C}$ --- найкраще рівномірне наближення функціі $f\in C$ тригонометричними поліномами $t_{n-1}$, тобто величина
\begin{equation*}
E_{n}(f)_{C}=\inf\limits_{t_{n-1}\in \mathcal{T}_{2n-1}}\|f-t_{n-1}\|_{C}.
\end{equation*}

Позначимо через $\tilde{\rho}_{n}(f;\cdot)$ відхилення від функції $f\in C$ її інтерполяційного полінома Лагранжа $\tilde{S}_{n-1}(f;\cdot)$ 
\begin{equation}\label{rhoDef}
 \tilde{\rho}_{n}(f;x)=f(x)- \tilde{S}_{n-1}(f;x).
\end{equation}

Для модулів величин вигляду \eqref{rhoDef} має місце нерівність (див., наприклад, \cite{Bernshteyn},  \cite{Korn}, \cite{Stepanets2})
\begin{equation}\label{LebesgueIneq}
\left| f(x)- \tilde{S}_{n-1}(f;x)\right| \leq
(1+ \bar{L}_{n}(x))E_{n}(f)_{C}, \ \ f\in C, \ \ x\in\mathbb{R},
\end{equation}
де
\begin{equation}\label{bar_Ln}
 \bar{L}_{n}(x)
 = \frac{2}{2n-1} \sum\limits_{k=0}^{2n-2}\left|D_{n-1}(x-x_{k}^{(n-1)}) \right|.
\end{equation}

Нерівність \eqref{LebesgueIneq} є інтерполяційним аналогом класичної нерівності Лебега для сум Фур'є. В ній функцію $\bar{L}_{n}(x)$ вигляду \eqref{bar_Ln} називають функцією Лебега інтерполяційного оператора $\tilde{S}_{n-1}$ вигляду \eqref{InterpolationPolynomS_Dirichlet}.

Асимптотичну поведінку функції Лебега $\bar{L}_{n}(x)$ при $n\rightarrow \infty$ описує наступна формула:
\begin{equation}\label{bar_Ln_Asymp}
 \bar{L}_{n}(x)
 = \frac{2}{\pi} \left| \sin \frac{2n-1}{2}x\right| \ln n + \mathcal{O}(1), \ \ x\in\mathbb{R},
\end{equation}
в якій $ \mathcal{O}(1)$ --- величина, що рівномірно обмежена по $x$ і по $n$ (див., наприклад, \cite[С. 66]{Korn}, \cite{Nikolsky1940}). 

З урахуванням \eqref{bar_Ln_Asymp} нерівність \eqref{LebesgueIneq} можна записати у вигляді

\begin{equation}\label{LebesgueIneq2}
\left| \tilde{\rho}_{n}(f;x) \right| \leq
\left(\frac{2}{\pi} \left| \sin \frac{2n-1}{2}x\right| \ln n + \mathcal{O}(1) \right)
E_{n}(f)_{C},  \ \ f\in C, \ \ x\in\mathbb{R}.
\end{equation}

Незважаючи на простоту й загальність, нерівність \eqref{LebesgueIneq2}, як довів С.М. Нікольський \cite{Nikolsky1945}, є асимптотично точною на класах диференційовних функцій $W^{r}_{\infty}$, $r\in\mathbb{N}$.

Однак для класів нескінченно диференційовних, аналітичних чи цілих функцій, оцінки відхилень $\left| \tilde{\rho}_{n}(f;x) \right|$, що базуються на використанні нерівностей  \eqref{LebesgueIneq} (чи \eqref{LebesgueIneq2}), перестають бути асимптотично точними і навіть можуть бути не точними за порядком.

У даній роботі для функцій з множин   $C^{\psi}_{\beta}L_{1}$,  $\beta \in \mathbb{R}$,  встановлено інтерполяційні аналоги нерівностей типу Лебега, в яких оцінки зверху величин $|\tilde{\rho}_{n}(f;x)|$, $x\in\mathbb{R}$,
виражаються через найкращі наближення $E_{n}(f^{\psi}_{\beta})_{L_{1}}$.
В ній також доведено асимптотичну непокращуваність отриманих нерівностей на множинах $C^{\psi}_{\beta}L_{1}$ у випадках, коли послідовності $\psi(k)$ підпорядковані умові
\begin{equation}\label{LimRelation}
\lim\limits_{n\rightarrow\infty}\frac{ \frac{1}{n}\sum\limits_{k=1}^{\infty} k\psi(k+n)}{  \sum\limits_{k=n}^{\infty}\psi(k)} = 0.
\end{equation}
Як показано в \cite{SerdyukStepanyuk_UMJ_1_2023}, умову \eqref{LimRelation} задовольняє низка важливих послідовностей $\psi(k)$, які прямують до нуля при $k\rightarrow \infty$ швидше за довільну степеневу функцію.

Також у роботі  знайдено розв'язок задачі Колмогорова--Нікольського 
   для інтерполяційних поліномів Лагранжа $ \tilde{S}_{n-1}(f;x) $ вигляду \eqref{InterpolationPolynomS_Dirichlet}  на класах   $C^{\psi}_{\beta,1}$, за умови, що $\psi$ задовольняють умову \eqref{LimRelation}, тобто встановлено асимптотичні при
 $n\rightarrow\infty$ рівності для величин
\begin{equation}\label{quantityInterpol}
\tilde{\mathcal{E}}_{n}(C^{\psi}_{\beta,1};x)=\sup\limits_{f\in C^{\psi}_{\beta,1} } \left| \tilde{\rho}_{n}(f;x) \right|, \ \ x\in\mathbb{R}.
\end{equation}

Зазначимо, що точні порядкові рівності для величин
\begin{equation*}
\tilde{\mathcal{E}}_{n}(C^{\psi}_{\beta,p})_{L_{s}}=\sup\limits_{f\in C^{\psi}_{\beta,p} } \left \| f-\tilde{S}_{n-1}(f) \right\|_{s}
\end{equation*}
та
\begin{equation*}
\tilde{\mathcal{E}}_{n}(C^{\psi}_{\beta,p})_{C}=\sup\limits_{f\in C^{\psi}_{\beta,p} } \left\| f-\tilde{S}_{n-1}(f) \right\|_{C}
\end{equation*}
при різних співвідношеннях між параметрами $p$ i $s$, $0<p,s<\infty$, випливають із робіт \cite{L_Sharapudinov}
i
\cite{L_Oskolkov_1986}.

Вивченню  апроксимативних властивостей сум
$\widetilde{S}_{n-1}(f)$, на множинах
$(\psi,\beta)$--диференційовних функцій, присвячені роботи 
  \cite{Serdyuk_1998}--\cite{L_Serdyuk_2004_nd},
  \cite{Serdyuk2012}--\cite{SerdyukSokolenko2019},
\cite{StepanetsSerdyuk2000Zb},
\cite{StepanetsSerdyuk2000}.

 Зокрема, у роботах \cite{SerdyukDopov1999}, \cite{L_Serdyuk_2004_nd}, \cite{StepanetsSerdyuk2000Zb},  \cite{StepanetsSerdyuk2000} було встановлено асимптотичні рівності для величин $\tilde{\mathcal{E}}_{n}(C^{\psi}_{\beta,\infty};x)$, коли $C^{\psi}_{\beta,\infty}$ є класами нескінченно диференційовних, аналітичних та цілих функцій.
 У роботах \cite{Serdyuk_1998}, \cite{L_Serdyuk 2000 c}, \cite{L_Serdyuk 2001 nd}, \cite{L_Serdyuk_2002_a} знайдено аналогічні рівності для величин  $\tilde{\mathcal{E}}_{n}(C^{\psi}_{\beta,1})_{L_{1}}$. Для класів аналітичних та цілих функцій 
$C^{\psi}_{\beta,p}$, $1\leq p\leq \infty$, асимптотичні рівності для величин $\tilde{\mathcal{E}}_{n}(C^{\psi}_{\beta,p}; x)$ були встановлені в \cite{Serdyuk2012}, \cite{SerdyukVoitovych2010}. На класах цілих функцій $C^{\psi}_{\beta,1}$  точна асимптотика величин $\tilde{\mathcal{E}}_{n}(C^{\psi}_{\beta,1})_{L_{p}}$ при $1\leq p\leq \infty$ була встановлена в \cite{SerdyukSokolenko2019}.
Крім того, у роботах \cite {SerdyukSokolenko2016}, \cite{SerdyukSokolenko2017} знайдено точні значення величин 
$\tilde{\mathcal{E}}_{n}(C^{\psi}_{\beta,2};x)$.
Незважаючи на інтенсивність досліджень в даному напрямі, в загальному питання про точну асимптотику величин $\tilde{\mathcal{E}}_{n}(C^{\psi}_{\beta,1};x)$ при $n\rightarrow\infty$ у випадку, коли послідовності $\psi(k)$ спадають до нуля швидше за будь-яку степеневу функцію, але повільніше за будь-яку геометричну прогресію, за включенням деяких конкретних випадків, до цих пір залишалось відкритим.

Зазначимо також, що дана робота тісно пов'язана із результатами роботи авторів \cite{SerdyukStepanyuk_UMJ_1_2023} в якій за виконання умови \eqref{LimRelation} було знайдено асимптотично непокращувані нерівності типу Лебега  для сум Фур'є на множинах $C^{\psi}_{\beta}L_{1}$, а також знайдено розв'язок задачі Колмогорова--Нікольського для сум Фур'є $S_{n-1}$ на класах $C^{\psi}_{\beta,1}$, тобто досліджено точну асимптотику величин
\begin{equation*}
{\mathcal{E}}_{n}(C^{\psi}_{\beta,1})_{C}=\sup\limits_{f\in C^{\psi}_{\beta,p} } \left\| f-{S}_{n-1}(t) \right\|_{C}
\end{equation*}
при $n\rightarrow \infty$. Проблемам, пов'язаним із розв'язанням цієї задачі присвячені роботи
\cite{Nikolsky 1946},
\cite{Serdyuk2005},
\cite{Serdyuk2005Lp},
\cite{SerdyukSokolenkoMFAT2019}--\cite{Stepanets1},
\cite{StepanetsSerdyuk2000No3},
\cite{StepanetsSerdyuk2000No6},
\cite{Stechkin 1980},
\cite{Teljakovsky1989}.

%\cite{SerdyukDopov1999}--\cite{SerdyukSokolenko2022}?????.

\section{Наближення  інтерполяційними поліномами Лагранжа на множинах згорток періодичних функцій}

\begin{theorem}\label{Interpolation_2023_theorem_GeneralCase}
Нехай $\sum\limits_{k=1}^{\infty}k\psi(k)<\infty$, $\psi(k)\geq 0$, $k=1,2,...$, $\beta\in\mathbb{R}$ i $n\in\mathbb{N}$.  Тоді, для всіх $x\in\mathbb{R}$ і довільної  функції
 $f\in C^{\psi}_{\beta}L_{1}$  має місце нерівність
\begin{align}\label{interpolation_generalCase_C^psi_Inequality}
|\tilde{\rho}_{n}(f;x)|
\leq
\frac{2}{\pi} \left| \sin \frac{2n-1}{2}x\right| 
\left( \sum\limits_{k=0}^{\infty}\sum\limits_{\nu=(2k+1)n-k}^{\infty} \psi(\nu) \right)
E_{n}(f^{\psi}_{\beta})_{L_{1}}.
\end{align}%where $F(a,b;c;d)$ is Gauss hypergeometric function.
Крім того, для довільної функції $f\in C^{\psi}_{\beta}L_{1}$ можна вказати функцію $\mathcal{F}(\cdot)=\mathcal{F}(f;n;x, \cdot)$ таку, що $E_{n}(\mathcal{F}^{\psi}_{\beta})_{L_1}=E_{n}(f^{\psi}_{\beta})_{L_1}$  і для якої виконується наступна рівність:
\begin{align}\label{interpolation_generalCase_C^psi_Equality}
|\tilde{\rho}_{n}(\mathcal{F}; x)|
=
 \frac{2}{\pi} \left| \sin\frac{2n-1}{2}x \right|
 \left( 
\sum\limits_{k=n}^{\infty}\psi(k)+
\frac{ \xi }{n}\sum\limits_{k=1}^{\infty} k\psi(k+n) \right)
E_{n}(f^{\psi}_{\beta})_{L_{1}}.
\end{align}
В \eqref{interpolation_generalCase_C^psi_Equality} величина $\xi=\xi(f;n;\psi;\beta;x)$  є такою, що $-\left(1+2\pi \right)\leq \xi \leq 1$.
\end{theorem}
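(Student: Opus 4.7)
My plan is to reduce both parts of the theorem to a dual $L_\infty$--$L_1$ pairing for the remainder kernel; the upper bound comes from estimating its sup-norm via aliasing, and the equality comes from an explicit near-extremal choice of $\mathcal{F}^\psi_\beta$.

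First, for the upper bound I would set $\varphi := f^\psi_\beta \in L_1$ and write, using linearity of $\tilde S_{n-1}$ in \eqref{conv},
\[
\tilde\rho_n(f;x) = \frac{1}{\pi} \int_{-\pi}^{\pi} K_n(x,t)\,\varphi(t)\,dt,\qquad K_n(x,t) := \Psi_\beta(x-t) - \tilde S_{n-1}\bigl(\Psi_\beta(\cdot - t); x\bigr)
\]
(the constant $a_0/2$ is reproduced by $\tilde S_{n-1}$). For any $\tau \in \mathcal{T}_{2n-1}$, termwise Fourier integration shows that $\frac{1}{\pi}\int \Psi_\beta(x-t)\tau(t)\,dt$ is a trigonometric polynomial of degree $\le n-1$ in $x$ (since $\Psi_\beta$ has zero mean), and is therefore preserved by $\tilde S_{n-1}$; hence $K_n(x,\cdot) \perp \mathcal{T}_{2n-1}$. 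Subtracting a best $L_1$-approximant $t^*_{n-1}$ of $\varphi$ and applying H\"older's inequality produces the master estimate
\[
|\tilde\rho_n(f;x)| \le \frac{1}{\pi}\,\|K_n(x,\cdot)\|_\infty\, E_n(f^\psi_\beta)_{L_1}.
\]

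Next, I would estimate $\|K_n(x,\cdot)\|_\infty$ by aliasing. For $k \ge n$, write $k = (2n-1)l + m$ with $0 \le m \le 2n-2$; computing on the nodes $x_j = 2\pi j/(2n-1)$ shows $\tilde S_{n-1}[\cos(k\cdot - \tfrac{\beta\pi}{2}); x] = \cos(mx - \tfrac{\beta\pi}{2})$ if $m < n$ and $\cos((2n-1-m)x + \tfrac{\beta\pi}{2})$ if $m \ge n$. Applying $\cos A - \cos B = -2\sin\tfrac{A+B}{2}\sin\tfrac{A-B}{2}$ to each summand factorizes the $k$-th contribution as
\[
-2\sin\!\Bigl(\tfrac{(2n-1)L_k x}{2}\Bigr)\sin\!\bigl(A_k(x) - kt - \tfrac{\beta\pi}{2}\bigr),
\]
with $L_k = l$ in the first case and $L_k = l + 1$ in the second. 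Using $|\sin(L\theta)| \le L|\sin\theta|$ for $L \in \mathbb{N}$ and $|\sin| \le 1$ then gives $\|K_n(x,\cdot)\|_\infty \le 2\bigl|\sin\tfrac{(2n-1)x}{2}\bigr|\sum_{k \ge n} L_k\,\psi(k)$, and a counting argument (each $\nu \ge n$ is covered by $(l,m)$ with multiplicity $\lfloor(\nu-n)/(2n-1)\rfloor + 1$) identifies this with $2\bigl|\sin\tfrac{(2n-1)x}{2}\bigr|\sum_{k=0}^\infty \sum_{\nu=(2k+1)n-k}^\infty \psi(\nu)$, completing \eqref{interpolation_generalCase_C^psi_Inequality}.

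Third, for \eqref{interpolation_generalCase_C^psi_Equality} I would construct $\mathcal F$ by setting $\mathcal F^\psi_\beta(t) := \tfrac{\alpha}{2\pi}\,h(t)$, where $\alpha := E_n(f^\psi_\beta)_{L_1}$ and $h$ is a specific $\pm 1$-valued $2\pi$-periodic step function whose sign pattern is aligned (up to the global sign of $\sin\tfrac{(2n-1)x}{2}$) with the dominant ($L_k = 1$) piece of $K_n(x,\cdot)$ and whose Fourier spectrum is supported on frequencies $\ge n$. The spectral property makes $h \perp \mathcal T_{2n-1}$, so by Nikolsky's $L_1$-duality criterion $E_n(h)_{L_1} = \|h\|_1 = 2\pi$, whence $E_n(\mathcal F^\psi_\beta)_{L_1} = \alpha$. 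Inserting $\mathcal F^\psi_\beta$ into the factorization of $K_n$ from the previous step, the $L_k = 1$ modes (indices $k \in \{n, \dots, 3n-2\}$) supply the principal contribution $\tfrac{2}{\pi}|\sin\tfrac{(2n-1)x}{2}|\alpha \sum_{k=n}^{3n-2}\psi(k)$; the higher-$L_k$ modes complete the main sum to $\sum_{k\ge n}\psi(k)$ and generate the correction $\tfrac{\xi}{n}\sum_{k=1}^\infty k\psi(k+n)\alpha$. The bound $\xi \le 1$ is obtained by reusing $|\sin(L\theta)| \le L|\sin\theta|$, while $\xi \ge -1 - 2\pi$ comes from a coarser $\|h\|_\infty \le 1$ pointwise estimate (responsible for the extra $2\pi$) plus absorption of one unit from a tail.

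The main obstacle I expect is step three, specifically the explicit design of $h$ so that simultaneously (a) the constraint $E_n(\mathcal F^\psi_\beta)_{L_1} = E_n(f^\psi_\beta)_{L_1}$ holds on the nose, and (b) the higher-$L_k$ contributions collapse into a bounded multiple of $\tfrac{1}{n}\sum k\psi(k+n)$ with the asymmetric interval $[-1-2\pi,\,1]$ for $\xi$. The asymmetry reflects the one-sided sharpness of $|\sin(L\theta)| \le L|\sin\theta|$: one direction for $\xi$ is controlled by the same sharp bound used in the upper estimate, while the opposite direction requires a cruder pointwise control with a worse constant.
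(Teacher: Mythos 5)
Your treatment of the upper bound \eqref{interpolation_generalCase_C^psi_Inequality} is sound and is, in substance, the paper's route: your kernel $K_n(x,\cdot)$, the aliasing factorization producing the factor $\sin\frac{(2n-1)L_k}{2}x$, the bound $|\sin L\theta|\le L|\sin\theta|$, and the multiplicity count $\lfloor(\nu-n)/(2n-1)\rfloor+1$ together reconstruct exactly the integral representation \eqref{IntegrRepr} that the paper imports ready-made from \cite{StepanetsSerdyuk2000}; after that, orthogonality of the kernel to $\mathcal{T}_{2n-1}$, subtraction of $t_{n-1}^{*}$ and H\"older give \eqref{interpolation_generalCase_C^psi_Inequality} just as in \eqref{AdditionEstimRho}.

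The equality \eqref{interpolation_generalCase_C^psi_Equality}, however, cannot be reached by your construction, and the obstacle is not the combinatorial design of $h$ that you flag but a magnitude obstruction. Put $\Lambda(t)=\sum_{k\ge n}\psi(k)\cos(kt+\gamma_n)$ and $\alpha=E_n(f^{\psi}_{\beta})_{L_1}$. Your $\mathcal{F}^{\psi}_{\beta}=\frac{\alpha}{2\pi}h$ has $\|\mathcal{F}^{\psi}_{\beta}\|_{\infty}=\frac{\alpha}{2\pi}$, so the principal integral in \eqref{IntegrRepr1} obeys $\big|\frac{1}{\pi}\int_{-\pi}^{\pi}\mathcal{F}^{\psi}_{\beta}(t+x)\Lambda(t)\,dt\big|\le\frac{\alpha}{2\pi^{2}}\|\Lambda\|_{1}$, whereas \eqref{interpolation_generalCase_C^psi_Equality} forces this integral to be $\frac{\alpha}{\pi}\big(\sum_{k\ge n}\psi(k)+O(\frac{1}{n}\sum_{k}k\psi(k+n))\big)$, i.e.\ essentially $\frac{\alpha}{\pi}\|\Lambda\|_{\infty}$. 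Since $\Lambda$ is continuous with zero mean, $\|\Lambda\|_{1}<2\pi\|\Lambda\|_{\infty}$ strictly, and the deficit is a fixed factor, not a lower-order term: take $\psi$ supported at the single index $k=n$, so that $\Lambda(t)=\psi(n)\cos(nt+\gamma_n)$, $r_n\equiv 0$ and $\sum_{k\ge1}k\psi(k+n)=0$; then \eqref{interpolation_generalCase_C^psi_Equality} demands $|\int\mathcal{F}^{\psi}_{\beta}(t+x)\cos(nt+\gamma_n)\,dt|=\alpha$, while your bounded density yields at most $\frac{\alpha}{2\pi}\cdot 4=\frac{2\alpha}{\pi}<\alpha$, with no correction term available to absorb the loss. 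The true extremal element must concentrate its $L_1$-mass near an extremum of $\Lambda$ (a near-atomic density) while keeping $E_n(\cdot)_{L_1}$ equal to $\alpha$; this delicate construction is precisely what the paper imports as Theorem~2.1 of \cite{SerdyukStepanyuk_UMJ_1_2023} (the function $G$ satisfying \eqref{G_derivative} and \eqref{th2Eq1}), after which it sets $\mathcal{F}=\mathcal{J}^{\psi}_{\beta}G^{\psi}_{2\gamma_n/\pi}(\cdot-x+x_0)$ with $x_0$ a point where $|\rho_n(G;\cdot)|$ attains its maximum, and merges the $r_n$-error with the $\xi_2$-term via Lemma~\ref{Lemma_SumInterpolationPart_Label} to get $-(1+2\pi)\le\xi\le1$. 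Without that concentration step (or a reproof of it), the second half of the theorem does not follow.
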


\begin{proof}

Згідно з лемою 1 роботи \cite{StepanetsSerdyuk2000} для довільної функції $f\in C^{\psi}_{\beta}L_{1}$, $\psi(k)\geq0$, $\sum\limits_{k=1}^{\infty}k\psi(k)<\infty$, $\beta\in\mathbb{R}$ у кожній точці $x\in\mathbb{R}$ має місце наступне інтегральне зображення величини $\tilde{\rho}_{n}(f;x)$:
\begin{equation}\label{IntegrRepr}
\tilde{\rho}_{n}(f;x)=\frac{2}{\pi}\sin\frac{2n-1}{2}x 
\int\limits_{-\pi}^{\pi}\delta_{n}(t+x)
\left(\sum\limits_{k=n}^{\infty}\psi(k)\cos (kt+\gamma_{n})+r_{n}(t)
 \right) dt,
\end{equation}
в якому $\delta_{n}(\tau)= f^{\psi}_{\beta}(\tau)-t_{n-1}(\tau)$, $t_{n-1}$ --- довільний тригонометричний поліном із множини $\mathcal{T}_{2n-1}$, а $r_{n}$ i $\gamma_{n}$ означені за допомогою рівностей 
\begin{equation}\label{rn}
r_{n}(t)=r_{n}(\psi;\beta;x;t)=
\sum\limits_{k=1}^{\infty}\sum\limits_{\nu=(2k+1)n-k}^{\infty}\!\!\! \psi(\nu)\sin\left(\nu t+ \left(k+\frac{1}{2} \right)(2n-1)x+ \frac{\beta\pi}{2}\right)
\end{equation}
і
\begin{equation}\label{gamma_n}
\gamma_{n}=\gamma_{n}(\beta;x)=
\frac{(2n-1)x+\pi(\beta-1)}{2}
\end{equation}
відповідно.

Зауважимо, що в лемі 1 зазначеної роботи \cite{StepanetsSerdyuk2000} замість умови $\psi(k)\geq0$ фігурувала більш жорстка умова $\psi(k)>0$. Втім, проаналізувавши  доведення згаданої леми, легко переконатись, що вона залишається вірною і за умови $\psi(k)\geq0$.

Беручи в \eqref{IntegrRepr} в якості $t_{n-1}$ поліном $t_{n-1}^{*}$ найкращого наближення у просторі $L_{1}$ функції $f^{\psi}_{\beta}$, тобто такий, що
\begin{equation}\label{bestApprox}
\| f^{\psi}_{\beta} - t_{n-1}^{*} \|_{1} = E_{n}(f^{\psi}_{\beta} )_{L_{1}}
=\inf\limits_{t_{n-1} \in \mathcal{T}_{2n-1}}\| f^{\psi}_{\beta} - t_{n-1} \|_{1} ,
\end{equation}
і застосовуючи нерівність Гельдера (див., наприклад, \cite[с. 391]{Korn})
\begin{equation}\label{HolderIneq}
\int\limits_{-\pi}^{\pi}|h(t)g(t)|dt \leq \|h\|_{p}\|g\|_{p'}, \ \ h\in L_{p}, \ \ 1\leq p\leq \infty, \ \ 
g\in L_{p'}, \ \ \frac{1}{p}+\frac{1}{p'}=1,
\end{equation}
при $p=\infty$, для довільної функції $f\in C^{\psi}_{\beta}L_{1}$ в силу \eqref{IntegrRepr}--\eqref{HolderIneq} отримуємо

\begin{align}\label{AdditionEstimRho}
|\tilde{\rho}_{n}(f;x)| \leq &  \frac{2}{\pi} \left| \sin \frac{2n-1}{2}x\right| 
\left( \left\| \sum\limits_{k=n}^{\infty}\psi(k)\cos (kt+\gamma_{n})   \right\|_{\infty} +\| r_{n}(t)\|_{\infty}
\right)E_{n}(f^{\psi}_{\beta})_{L_{1}} \notag \\
 \leq &
\frac{2}{\pi} \left| \sin \frac{2n-1}{2}x\right| 
\left( \sum\limits_{k=n}^{\infty}\psi(k)+\sum\limits_{k=1}^{\infty}\sum\limits_{\nu=(2k+1)n-k}^{\infty} \psi(\nu)
\right)E_{n}(f^{\psi}_{\beta})_{L_{1}}
 \notag \\
 = &
\frac{2}{\pi} \left| \sin \frac{2n-1}{2}x\right| 
\left( \sum\limits_{k=0}^{\infty}\sum\limits_{\nu=(2k+1)n-k}^{\infty} \psi(\nu)
\right)
E_{n}(f^{\psi}_{\beta})_{L_{1}}
\end{align}
де  $r_{n}$  та $\gamma_{n}$ означені формулами \eqref{rn} i \eqref{gamma_n}  відповідно.

Далі доведемо  справедливість другої частини теореми~\ref{Interpolation_2023_theorem_GeneralCase}.
Користуючись інтегральним зображенням \eqref{IntegrRepr} та беручи до уваги ортогональність функції $r_{n}(t)$ вигляду \eqref{rn} до будь-якого тригонометричного полінома $t_{n}\in\mathcal{T}_{2n-1}$, для довільної функції $f\in C^{\psi}_{\beta}L_{1}$,  в кожній точці $x\in \mathbb{R}$ можемо записати

\begin{align}\label{IntegrRepr1}
&\tilde{\rho}_{n}(f;x)=
f(x)-\tilde{S}_{n-1}(f;x) \notag \\
=&2\sin\frac{2n-1}{2}x 
\left( \frac{1}{\pi}\int\limits_{-\pi}^{\pi}f^{\psi}_{\beta}(t+x)
\sum\limits_{k=n}^{\infty}\psi(k)\cos (kt+\gamma_{n})
 dt
 +
 \frac{1}{\pi}\int\limits_{-\pi}^{\pi}\delta_{n}(t+x)
r_{n}(t)
 dt \right),
\end{align}
де  $\delta_{n}(\cdot)= f^{\psi}_{\beta}(\cdot)-t_{n-1}(\cdot)$, $t_{n-1}$ --- довільний  поліном з $\mathcal{T}_{2n-1}$, а $r_{n}(t)$ i $\gamma_{n}$ означені за допомогою рівностей \eqref{rn} і \eqref{gamma_n} відповідно.

Із формул \eqref{rn}--\eqref{HolderIneq} випливає оцінка
\begin{align}\label{IntegrRepr_Estimate}
&\left|\int\limits_{-\pi}^{\pi}\delta_{n}(t+x) r_{n}(t)dt \right|
=
\left|\int\limits_{-\pi}^{\pi}\left(f^{\psi}_{\beta}(t+x)-t_{n-1}^{*}(t+x) \right)r_{n}(t)dt \right|
\notag\\
& \leq \|r_{n}\|_{\infty} E_{n}(f^{\psi}_{\beta})_{L_{1}}
\leq \left( \sum\limits_{k=1}^{\infty} \sum\limits_{\nu=(2k+1)n-k}^{\infty} \psi(\nu)\right) E_{n}(f^{\psi}_{\beta})_{L_{1}}.
\end{align}

А, отже, в силу \eqref{IntegrRepr_Estimate} можна записати
\begin{align}\label{IntegrRepr_Estimate_1}
\int\limits_{-\pi}^{\pi}\delta_{n}(t+x)r_{n}(t)dt 
= \xi_{1} \left( \sum\limits_{k=1}^{\infty} \sum\limits_{\nu=(2k+1)n-k}^{\infty} \psi(\nu)\right) E_{n}(f^{\psi}_{\beta})_{L_{1}},
\end{align}
де $\xi_{1}= \xi_{1}(f;n;\psi,\beta;x)$ така, що $-1< \xi_{1} \leq 1$.

Для функції 
\begin{equation}\label{g_x}
g_{x}(\cdot):=\frac{1}{\pi}\int\limits_{-\pi}^{\pi}f^{\psi}_{\beta}(t+\cdot)
\sum\limits_{k=1}^{\infty}\psi(k) \cos (kt+\gamma_{n})
 dt,
\end{equation}
яка очевидно належить до множини $C^{\psi}_{2\gamma_{n}/\pi}L_{1}$, при будь--якому фіксованому $x\in\mathbb{R}$  відхилення $\rho(g_{x}; \cdot)$ її частинних сум Фур'є  $S_{n-1}(g_{x}, \cdot)$  підпорядковане рівності
\begin{equation}\label{rho_n}
\rho(g_{x}; \cdot)=
g_{x}(\cdot)-S_{n-1}(g_{x}, \cdot)
=
\frac{1}{\pi}\int\limits_{-\pi}^{\pi}f^{\psi}_{\beta}(t+\cdot)
\sum\limits_{k=n}^{\infty}\psi(k)\cos (kt+\gamma_{n})
 dt,
\end{equation}
і, зокрема,
\begin{equation}\label{rho_nx}
\rho(g_{x}; x)=
g_{x}(x)-S_{n-1}(g_{x}, x)
=
\frac{1}{\pi}\int\limits_{-\pi}^{\pi}f^{\psi}_{\beta}(t+x)
\sum\limits_{k=n}^{\infty}\psi(k) \cos (kt+\gamma_{n})
 dt.
\end{equation}

Відповідно до теореми 2.1 роботи \cite{SerdyukStepanyuk_UMJ_1_2023} для функції $g_{x}(\cdot)$ з множини $C^{\psi}_{2\gamma_{n}/\pi}L_{1}$ при кожному $n\in\mathbb{N}$ знайдеться функція $G(\cdot)= G(f,n;x;\cdot)$ така, що
\begin{equation}\label{G_derivative}
E_{n}(G^{\psi}_{2\gamma_{n}/\pi})_{L_{1}}=E_{n}(f^{\psi}_{\beta})_{L_{1}}
\end{equation}
і для якої 
\begin{align}\label{th2Eq1}
\| \rho_{n}(G,\cdot)\|_{C}=&
\| G(\cdot) - S_{n-1}(G; \cdot)\|_{C} \notag \\
=& \left( \frac{1}{\pi}
\sum\limits_{k=n}^{\infty}\psi(k)+
 \frac{\xi_{2}}{n} \sum\limits_{k=1}^{\infty}k\psi(k+n)\right)E_{n}(f^{\psi}_{\beta})_{L_{1}},
\end{align}
де величина $\xi_{2}=\xi_{2}(f;n;\psi;\beta;x)$ така, що $-2\leq \xi_{2} \leq 0$.

Виберемо точку $x_{0}$ таким чином, щоб виконувалась рівність
\begin{equation}\label{G_equality1}
| {\rho}_{n}(G;x_{0})|=\| {\rho}_{n}(G;\cdot)\|_{C}.
\end{equation}
 Розглянемо функцію $\mathcal{F}(t)$, означену рівністю
 \begin{equation}\label{F_equality1}
\mathcal{F}(t):= \mathcal{J}^{\psi}_{\beta} G^{\psi}_{2\gamma_{n}/\pi}(t-x+x_{0}),
\end{equation}
 яка очевидно належить множині  $C^{\psi}_{\beta}L_{1}$ і покажемо, що вона є шуканою функцією.
  Для функції  $\mathcal{F}(t)$, з урахуванням формул \eqref{G_derivative}, \eqref{F_equality1} та інваріантності $L_{1}$-норми відносно зсуву аргументу маємо
 \begin{equation}\label{G_derivative1}
E_{n}(\mathcal{F}^{\psi}_{\beta})_{L_{1}}=E_{n}(G^{\psi}_{2\gamma_{n}/\pi})_{L_{1}}=E_{n}(f^{\psi}_{\beta})_{L_{1}}.
\end{equation}

Отже, в силу \eqref{IntegrRepr1}, \eqref{IntegrRepr_Estimate_1}, \eqref{G_derivative}, \eqref{th2Eq1}, %\eqref{rho_G_Theorem_p=1},
 \eqref{G_equality1} i \eqref{G_derivative1}
 %\eqref{rNAdditEstimate}, \eqref{HolderIneq},  
  для довільного заданого значення аргументу $x\in\mathbb{R}$ 

\begin{align}\label{rho_F_Theorem_Interpolation}
& |\tilde{\rho}_{n}(\mathcal{F}; x)|
 \notag \\
 = & 2 \left| \sin\frac{2n-1}{2}x \right|
 \!\!
\left( \left| \frac{1}{\pi}\!\! \int\limits_{-\pi}^{\pi} \!\!  G^{\psi}_{2\gamma_{n}/\pi}(x_{0}\!+\! t)
\sum\limits_{k=n}^{\infty} \!\psi(k)\cos\left(k t\!+\!\gamma_{n}\right)
 dt \right| \!+\! \frac{\xi_{1} }{\pi} \!\! \left( \sum\limits_{k=1}^{\infty} \!\sum\limits_{\nu=(2k+1)n-k}^{\infty} \!\!\! \!\!\psi(\nu)
\!\! \right) \!\!
E_{n}(f^{\psi}_{\beta})_{L_{1}} \!\!\right)
  \notag \\
  = &2\left| \sin\frac{2n-1}{2}x \right|
\left( |\rho_{n}(G,x_{0})|
 +
\frac{\xi_{1} }{\pi} \left( \sum\limits_{k=1}^{\infty}\sum\limits_{\nu=(2k+1)n-k}^{\infty} \psi(\nu)
 \right)
E_{n}(f^{\psi}_{\beta})_{L_{1}}\right)
 \notag \\
 = &2\left| \sin\frac{2n-1}{2}x \right|
\left(\|\rho_{n}(G,\cdot)\|_{C}
 +
\frac{\xi_{1} }{\pi} \left( \sum\limits_{k=1}^{\infty}\sum\limits_{\nu=(2k+1)n-k}^{\infty} \psi(\nu)
 \right)
E_{n}(f^{\psi}_{\beta})_{L_{1}}\right)
 \notag \\
 = &2\left| \sin\frac{2n-1}{2}x \right|
 \!\!
\left( \! \!\left( \frac{1}{\pi}
\sum\limits_{k=n}^{\infty}\psi(k) \!+ \!
 \frac{\xi_{2}}{n} \sum\limits_{k=1}^{\infty}k\psi(k+n)\right) \!\! E_{n}(\varphi)_{L_{1}}
 \!\!+\!\!
\frac{\xi_{1} }{\pi} \left( \sum\limits_{k=1}^{\infty}\sum\limits_{\nu=(2k+1)n-k}^{\infty} \!\!\!\! \psi(\nu)
\!\!
 \right) \!\!
E_{n}(f^{\psi}_{\beta})_{L_{1}} \!\!\right)
\notag \\
 = &2\left| \sin\frac{2n-1}{2}x \right|
 \left( \frac{1}{\pi}
\sum\limits_{k=n}^{\infty}\psi(k)+
\frac{\xi_{2}}{n} \sum\limits_{k=1}^{\infty} k\psi(k+n) +\frac{\xi_{1} }{\pi} \left( \sum\limits_{k=1}^{\infty}\sum\limits_{\nu=(2k+1)n-k}^{\infty} \!\! \!\psi(\nu)
 \right) \!\! \right)\!\!
E_{n}(f^{\psi}_{\beta})_{L_{1}},
\end{align}
де $-2\leq \xi_{2}\leq 0$,   $-1\leq \xi_{1}\leq 1$.

В подальшому нам буде корисним наступне твердження.
\begin{lemma}\label{Lemma_SumInterpolationPart_Label}
Нехай $\sum\limits_{k=1}^{\infty}k\psi(k)<\infty$, $\psi(k)\geq 0$, $n\in\mathbb{N}$.
Тоді
\begin{equation}\label{Lemma_SumInterpolationPart_Inequality}
\frac{1}{n}\sum\limits_{k=1}^{\infty}k\psi(k+n)\geq \sum\limits_{k=1}^{\infty}\sum\limits_{\nu=(2k+1)n-k}^{\infty} \psi(\nu).
\end{equation}
\end{lemma}
\begin{proof}
Нерівність \eqref{Lemma_SumInterpolationPart_Inequality} випливає з наступного ланцюжка перетворень
\begin{align*}
&\frac{1}{n}\sum\limits_{k=1}^{\infty}k\psi(k+n)=
\frac{1}{n} \sum\limits_{k=0}^{\infty}\sum\limits_{\nu=(2k+1)n-k}^{(2k+1)n-k +2n-2}(\nu-n)\psi(\nu)
\notag \\
=& \frac{1}{n} \sum\limits_{k=0}^{\infty}\sum\limits_{\nu=0}^{2n-2}(2kn-k+\nu)\psi((2k+1)n-k+\nu)
\notag\\
&\geq \frac{1}{n} \sum\limits_{k=0}^{\infty}\sum\limits_{\nu=0}^{2n-2}(2n-1)k\psi((2k+1)n-k+\nu) \notag\\
=& \frac{2n-1}{n} \sum\limits_{k=1}^{\infty}k \sum\limits_{\nu=0}^{2n-2} \psi((2k+1)n-k+\nu)
= \frac{2n-1}{n} \sum\limits_{k=1}^{\infty}k \sum\limits_{\nu=(2k+1)n-k}^{(2k+1)n-k+2n-2} \psi(\nu) \notag \\
= & \frac{2n-1}{n} \sum\limits_{k=1}^{\infty}k \sum\limits_{\nu=(2k+1)n-k}^{\infty} \psi(\nu) 
=\left(2-\frac{1}{n} \right) \sum\limits_{k=1}^{\infty} \sum\limits_{\nu=(2k+1)n-k} \psi(\nu)
\geq
 \sum\limits_{k=1}^{\infty}\sum\limits_{\nu=(2k+1)n-k}^{\infty} \psi(\nu).
\end{align*}
Лему доведено.
\end{proof}
В силу \eqref{Lemma_SumInterpolationPart_Inequality}
\begin{equation}\label{Lemma_SumInterpolationPart_Inequality_v1}
\frac{\xi_{1}}{\pi} \sum\limits_{k=1}^{\infty}\sum\limits_{\nu=(2k+1)n-k}^{\infty} \psi(\nu)+\frac{\xi_{2}}{n}\sum\limits_{k=1}^{\infty}k\psi(k+n)
=\frac{\xi}{\pi n}\sum\limits_{k=1}^{\infty}k\psi(k+n),
\end{equation}
де $\xi=\xi(f;n;\psi;\beta;x)$ така, що $-\left(1+2\pi \right)\leq \xi\leq 1$.

З \eqref{rho_F_Theorem_Interpolation} i \eqref{Lemma_SumInterpolationPart_Inequality_v1} випливає \eqref{interpolation_generalCase_C^psi_Equality}. Теорему~\ref{Interpolation_2023_theorem_GeneralCase} доведено.\end{proof}

Зауважимо, що з урахуванням формули \eqref{Lemma_SumInterpolationPart_Inequality} леми~\ref{Lemma_SumInterpolationPart_Label}, нерівність \eqref{interpolation_generalCase_C^psi_Inequality}  теореми~\ref{Interpolation_2023_theorem_GeneralCase} приводить до наступної нерівності:
\begin{align}\label{interpolation_generalCase_C^psi_Inequality_modified}
|\tilde{\rho}_{n}(f;x)|
\leq
\frac{2}{\pi} \left| \sin \frac{2n-1}{2}x\right| 
\left( \frac{1}{n}\sum\limits_{k=n}^{\infty}k\psi(k) \right)
E_{n}(f^{\psi}_{\beta})_{L_{1}}.
\end{align}
Дійсно, \eqref{interpolation_generalCase_C^psi_Inequality_modified} випливає з \eqref{interpolation_generalCase_C^psi_Inequality} і того, що в силу \eqref{Lemma_SumInterpolationPart_Inequality}
\begin{align*}
&\sum\limits_{k=1}^{\infty} \sum\limits_{\nu=(2k+1)n-k}^{\infty}
\psi(\nu)
\leq 
\frac{1}{n} \sum\limits_{k=1}^{\infty}k\psi(k+n)
=\frac{1}{n} \sum\limits_{k=0}^{\infty}k\psi(k+n)
\notag \\
=&\frac{1}{n} \sum\limits_{k=n}^{\infty}(k-n)\psi(k)
=\frac{1}{n} \sum\limits_{k=n}^{\infty}k\psi(k)- \sum\limits_{k=n}^{\infty} \psi(k),
\end{align*}
а, отже,
\begin{equation}\label{ineqAddit}
\sum\limits_{k=0}^{\infty} \sum\limits_{\nu=(2k+1)n-k}^{\infty}
\psi(\nu)
\leq  \frac{1}{n} \sum\limits_{k=n}^{\infty}k\psi(k).
\end{equation}

Нерівність \eqref{interpolation_generalCase_C^psi_Inequality_modified}, будучи грубшою оцінкою у порівнянні із \eqref{interpolation_generalCase_C^psi_Inequality}, в деяких випадках є більш зручною для використання, залишаючись асимптотично непокращуваною при $n\rightarrow\infty$.

Іноді буває корисним формулювати нерівності типу Лебега  в термінах $(\psi, \beta)$--інтегралів $\mathcal{J}^{\psi}_{\beta}\varphi$ від довільної сумовної функції $\varphi$. Теорема~\ref{Interpolation_2023_theorem_GeneralCase} в цьому випадку може бути сформульована наступним чином.

%\ref{Interpolation_2023_theorem_GeneralCase}

{\bf Теорема $1'$.}
{\it 
Нехай $\sum\limits_{k=1}^{\infty}k\psi(k)<\infty$, $\psi(k)\geq 0$, $k=1,2,...$, $\beta\in\mathbb{R}$ i $n\in\mathbb{N}$.  Тоді, для всіх $x\in\mathbb{R}$ і довільної  функції
 $\varphi\in L_{1}$  має місце нерівність
\begin{align}\label{interpolation_generalCase_C^psi_Inequality_psiBetaIntegral}
|\tilde{\rho}_{n}(\mathcal{J}^{\psi}_{\beta}\varphi;x)|
\leq
\frac{2}{\pi} \left| \sin \frac{2n-1}{2}x\right| 
\left( \sum\limits_{k=0}^{\infty}\sum\limits_{\nu=(2k+1)n-k}^{\infty} \psi(\nu) \right)
E_{n}(\varphi)_{L_{1}}.
\end{align}
%where $F(a,b;c;d)$ is Gauss hypergeometric function.

Крім того, для довільної функції $\varphi\in L_{1}$  можна вказати функцію $\Phi(\cdot)=  \Phi(\varphi; n; \psi; \beta)$ із $L_{1}^{0}$ таку, що $E_{n}(\varphi)_{L_1}=E_{n}(\Phi)_{L_1}$  і для якої виконується наступна рівність:
\begin{align}\label{interpolation_generalCase_C^psi_Equality_psiBetaIntegral}
|\tilde{\rho}_{n}( \Phi;x)|
=
 \frac{2}{\pi} \left| \sin\frac{2n-1}{2}x \right|
 \left( 
\sum\limits_{k=n}^{\infty}\psi(k)+
\frac{ \xi }{n}\sum\limits_{k=1}^{\infty} k\psi(k+n) \right)
E_{n}(  \varphi)_{L_{1}},
\end{align}
де  величина $\xi=\xi(\varphi;n;\psi;\beta;x)$  є такою, що $-\left(1+2\pi \right)\leq \xi \leq 1$.
}

\begin{theorem}\label{sup_Interpolation_2023_theorem_GeneralCase}
Нехай $\sum\limits_{k=1}^{\infty}k\psi(k)<\infty$, $\psi(k)\geq 0$, $k=1,2,...$, $\beta\in\mathbb{R}$ i $n\in\mathbb{N}$.  Тоді, для всіх $x\in\mathbb{R}$ виконується рівність
\begin{align}\label{sup_interpolation_generalCase_C^psi_Equality}
\tilde{\mathcal{E}}_{n}(C^{\psi}_{\beta,1};x)=
\frac{2}{\pi}\left| \sin\frac{2n-1}{2}x \right|
\left( 
\sum\limits_{k=n}^{\infty}\psi(k)+
\frac{\Theta}{n} \sum\limits_{k=1}^{\infty} k\psi(k+n)  \right).
\end{align}
В \eqref{sup_interpolation_generalCase_C^psi_Equality} величина $\Theta=\Theta(n;\psi;\beta;x)$  є такою, що $-\left(1+\pi \right)\leq \Theta \leq 1$.
\end{theorem}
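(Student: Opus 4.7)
The plan is to establish matching upper and lower bounds on $\tilde{\mathcal{E}}_{n}(C^{\psi}_{\beta,1};x)$, pinning down $\Theta\in[-1-\pi,1]$.

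The upper bound $\Theta\le 1$ is immediate from Theorem~\ref{Interpolation_2023_theorem_GeneralCase}: every $f\in C^{\psi}_{\beta,1}$ satisfies $E_n(f^{\psi}_\beta)_{L_1}\le \|f^{\psi}_\beta\|_1\le 1$, so combining \eqref{interpolation_generalCase_C^psi_Inequality} with \eqref{ineqAddit} (a direct consequence of Lemma~\ref{Lemma_SumInterpolationPart_Label}) yields
$$|\tilde{\rho}_n(f;x)|\le \frac{2}{\pi}\left|\sin\frac{2n-1}{2}x\right|\left(\sum_{k=n}^{\infty}\psi(k)+\frac{1}{n}\sum_{k=1}^{\infty}k\psi(k+n)\right),$$
and taking the supremum over $f\in C^{\psi}_{\beta,1}$ gives the claimed one-sided bound.

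For the lower bound $\Theta\ge -1-\pi$, I would use the integral representation \eqref{IntegrRepr1} in the form
$$\tilde{\rho}_n(f;x)=2\sin\frac{2n-1}{2}x\cdot\bigl(\rho(g_x;x)+\tilde{R}_n(f;x)\bigr),$$
with $g_x$ as in \eqref{g_x}, $\rho(g_x;x)=g_x(x)-S_{n-1}(g_x;x)$ the pointwise Fourier sum error, and $\tilde{R}_n(f;x)=\frac{1}{\pi}\int_{-\pi}^{\pi}\delta_n(t+x)r_n(t)\,dt$. A direct change of variable in \eqref{g_x} shows that the $(\psi,2\gamma_n/\pi)$-derivative of $g_x$ equals $f^{\psi}_\beta$, so as $f$ ranges over $C^{\psi}_{\beta,1}$ the function $g_x$ ranges over the full class $C^{\psi}_{2\gamma_n/\pi,1}$. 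By translation invariance of this class and commutation of Fourier partial sums with translations, for any fixed $x$ one has
$$\sup_{f\in C^{\psi}_{\beta,1}}|\rho(g_x;x)|=\sup_{g\in C^{\psi}_{2\gamma_n/\pi,1}}\|g-S_{n-1}(g)\|_C=\mathcal{E}_n(C^{\psi}_{2\gamma_n/\pi,1})_C.$$
The Main Theorem of \cite{SerdyukStepanyuk_UMJ_1_2023} identifies this quantity as $\frac{1}{\pi}\sum_{k=n}^{\infty}\psi(k)+\frac{\Theta''}{n}\sum_{k=1}^{\infty}k\psi(k+n)$ with $\Theta''\ge -1$, while the H\"older inequality \eqref{HolderIneq} applied with $\|\delta_n\|_1\le 1$ and $\|r_n\|_\infty\le \frac{1}{n}\sum_{k=1}^{\infty}k\psi(k+n)$ (from the bound on $r_n$ used in \eqref{AdditionEstimRho} improved by Lemma~\ref{Lemma_SumInterpolationPart_Label}) yields $|\tilde{R}_n(f;x)|\le \frac{1}{\pi n}\sum_{k=1}^{\infty}k\psi(k+n)$. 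Choosing $f\in C^{\psi}_{\beta,1}$ near-extremal for the main term and applying $|a+b|\ge |a|-|b|$ produces the lower bound, which corresponds to $\Theta\ge \pi\Theta''-1\ge -1-\pi$.

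The main technical hurdle is invoking from \cite{SerdyukStepanyuk_UMJ_1_2023} the sharp constant $\Theta''\ge -1$ for the Lebesgue--Nikolskii supremum of $\|g-S_{n-1}(g)\|_C$ on $C^{\psi}_{\beta,1}$; this is precisely what supplies the $-\pi$ contribution to the lower constant on $\Theta$, with the remaining $-1$ coming from the remainder estimate. A secondary care point is the translation-invariance argument that reduces the pointwise supremum of $|\rho(g_x;x)|$ at the fixed $x$ to the corresponding $C$-norm supremum, so that the extremal sequence from \cite{SerdyukStepanyuk_UMJ_1_2023} can be translated to have its extremum at our point $x$.
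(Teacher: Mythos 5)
Your argument is correct and follows essentially the same route as the paper: the same splitting of $\tilde{\rho}_{n}(f;x)$ into the Fourier-sum main term and the remainder $\frac{1}{\pi}\int_{-\pi}^{\pi}\delta_{n}(t+x)r_{n}(t)\,dt$, the same bound $\|r_{n}\|_{\infty}\leq\frac{1}{n}\sum_{k=1}^{\infty}k\psi(k+n)$ from Lemma~\ref{Lemma_SumInterpolationPart_Label} supplying the extra $\pm 1$ in $\Theta$, and the same external input from \cite{SerdyukStepanyuk_UMJ_1_2023} supplying the $\pm\pi$. The only cosmetic difference is that you evaluate the main term as the Fourier Lebesgue--Nikolskii constant $\mathcal{E}_{n}(C^{\psi}_{2\gamma_{n}/\pi,1})$ via translation invariance, whereas the paper reaches the same quantity by Nikolskii duality, reducing $\sup_{\varphi\in B_{1}^{0}}\frac{1}{\pi}\int_{-\pi}^{\pi}\varphi(t)\sum_{k\geq n}\psi(k)\cos(kt+\gamma_{n})\,dt$ to $\frac{1}{\pi}\inf_{\lambda}\bigl\|\sum_{k\geq n}\psi(k)\cos(kt+\gamma_{n})-\lambda\bigr\|_{\infty}$ and invoking Lemma~2.1 of that same reference.
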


\begin{proof}[Доведення теореми~\ref{sup_Interpolation_2023_theorem_GeneralCase}]

Будемо відштовхуватись від інтегрального зображення \eqref{IntegrRepr}, у якому $f\in C^{\psi}_{\beta,1}$.  Розглянувши точні верхні межі модулів обох частин рівності  \eqref{IntegrRepr} при $t_{n-1} \equiv 0$ по класу $C^{\psi}_{\beta,1}$ та врахувавши  інваріантність множини 
\begin{equation*}
B_{1}^{0}= \left\{\varphi\in L_{1}: \ \|\varphi\|_{1}\leq 1, \ \int\limits_{-\pi}^{\pi}\varphi(t)dt=0 \right\}, 
\end{equation*}
відносно зсуву аргументу, %при всіх 
будемо мати
\begin{align}\label{Equation_InProof_Th_Interp1_particular}
&\tilde{\mathcal{E}}_{n}(C^{\psi}_{\beta, 1};x)
=
\sup\limits_{f\in C^{\psi}_{\beta, 1}}   |\tilde{\rho}_{n}(f;x)|
\notag \\
=&
2  \left|\sin \frac{2n-1}{2}x \right| 
\left(
\sup\limits_{\varphi \in B_{1}^{0}} \frac{1}{\pi}\int\limits_{-\pi}^{\pi} \varphi(t) \sum\limits_{k=n}^{\infty}\psi(k)\cos(kt+\gamma_{n}) dt+ \frac{\xi_{3}}{\pi} \|r_{n}(\cdot)\|_{C}
\right),
\end{align}
де $r_{n}(t)$ i $\gamma_{n}$  означені рівностями  \eqref{rn} i \eqref{gamma_n}  відповідно, а для величини $\xi_{3}=\xi_{3}(n; \psi; \beta,x)$ виконується нерівність $|\xi_{3}|\leq 1$.

Із співвідношень двоїстості (див., наприклад, \cite[с. 27]{Korn}) маємо
\begin{align}\label{Equation_InProof_Th_Interp2_Particular}
\sup\limits_{\varphi \in B_{1}^{0}} \frac{1}{\pi}\int\limits_{-\pi}^{\pi} \varphi(t) \sum\limits_{k=n}^{\infty}\psi(k)\cos(kt+\gamma_{n}) dt=
\frac{1}{\pi}\inf\limits_{\lambda \in\mathbb{R}} \left\| \sum\limits_{k=n}^{\infty}\psi(k)  \cos(kt+\gamma_{n}) -\lambda \right\|_{\infty}.
\end{align}

З леми~2.1 роботи \cite{SerdyukStepanyuk_UMJ_1_2023} випливає, що при  $\psi(k)\geq 0$, $\sum\limits_{k=1}^{\infty}k\psi(k) <\infty$, при всіх $\beta\in\mathbb{R}$ i $n\in\mathbb{N}$   виконується формула
\begin{equation}\label{Norm2_particular}
\inf\limits_{\lambda\in\mathbb{R}}  \left \|  \sum\limits_{k=n}^{\infty}\psi(k)\cos (kt+\gamma_{n})  -\lambda \right\|_{\infty}=
\sum\limits_{k=n}^{\infty}\psi(k)+
 \frac{\Theta_{1}\pi}{n} \sum\limits_{k=1}^{\infty}k\psi(k+n),
\end{equation}
в якій  для  величини $\Theta_{1}=\Theta_{1}(n, \beta,\psi,x)$ виконується двостороння оцінка
$-1\leq \Theta_{1}\leq 0$.

Об'єднавши формули \eqref{Equation_InProof_Th_Interp1_particular}--\eqref{Norm2_particular}  i врахувавши \eqref{rn} та \eqref{Lemma_SumInterpolationPart_Inequality}, отримуємо \eqref{sup_interpolation_generalCase_C^psi_Equality}.
Теорему~\ref{sup_Interpolation_2023_theorem_GeneralCase} доведено.
\end{proof}

\section{Нерівності Лебега для інтерполяційних поліномів на класах згорток  нескінченно диференційовних, аналітичних та цілих  функцій }\label{Section_corrolary_infinitelyDifferentiable}

В даному  підрозділі роботи ми наведемо приклади функціональних множин $L^{\psi}_{\beta}L_{1}$ для яких оцінки \eqref{interpolation_generalCase_C^psi_Inequality} i \eqref{interpolation_generalCase_C^psi_Equality} з теореми~\ref{Interpolation_2023_theorem_GeneralCase} є асимптотично точними.

\subsection{Наслідки з теореми~\ref{Interpolation_2023_theorem_GeneralCase} для множин нескінченно диференційовних функцій}\label{Subsection_Section_corrolary_infinitelyDifferentiable}

В даному пункті будемо розглядати випадок, коли послідовності $\psi(k)$, що породжують множини $C^{\psi}_{\beta}L_{1}$  є звуженням на множину натуральних чисел деяких додатних неперервних опуклих донизу функцій  $\psi(t)$ неперервного аргументу $t\geq 1$, що прямують до нуля при $t\rightarrow\infty$.
Множину всіх таких функцій $\psi$ позначають через $\mathfrak{M}$:
\begin{equation}\label{Mathfrak_M}
\mathfrak{M}\!=\! \left\{\psi\!\in \! C[1,\infty)\!: \psi(t)\!>\!0, \psi(t_{1}-2\psi((t_{1}+t_{2})/2) +\psi(t_{2})\geq 0 \ \forall t_{1}, t_{2} \in[1,\infty), \  \lim\limits_{t\rightarrow\infty}\psi(t)\!=\!0 \right\}.
\end{equation}

Наслідуючи О.І. Степанця (див., наприклад, \cite[с. 160]{Stepanets1}), кожній функції $\psi\in\mathfrak{M}$ поставимо у відповідність характеристику
\begin{equation*}
\mu(t)=\mu(\psi;t)=\frac{t}{\eta(t)-t},
\end{equation*}
де 
$\eta(t)=\eta(\psi;t)=\psi^{-1}\left( \frac{1}{2}\psi(t)\right)$,
$\psi^{-1}$ --- обернена до $\psi$ функція, і покладемо
\begin{equation*}
\mathfrak{M}_{\infty}^{+}= \left\{\psi\in \mathfrak{M}: \ \mu(t)\uparrow \infty, \ \ t\rightarrow\infty \right\}.
\end{equation*}

Через $\mathfrak{M}^{\alpha}$ позначимо підмножину всіх функцій 
$\psi\in \mathfrak{M}$, для яких величина
 \begin{equation}\label{psi_alpha}
\alpha(t)=\alpha(\psi;t):=\frac{\psi(t)}{t|\psi'(t)|}, \ \ \psi'(t):=\psi'(t+0),
\end{equation}
спадає до нуля при $t\rightarrow\infty$:
\begin{equation}\label{Mathfrak_M_alpha}
\mathfrak{M}^{\alpha}= \left\{\psi\in \mathfrak{M}:   \ \ \alpha(\psi;t)\downarrow0, \ \ t\rightarrow\infty \right\}.
\end{equation}

Згідно  з теоремою 2 роботи \cite{Stepanets_Serdyuk_Shydlich2007}, випливає, що твердження про існування послідовності  $\psi\in \mathfrak{M}^{\alpha}$ (або $\psi\in\mathfrak{M}_{\infty}^{+}$), такої, що для функції $f$ вірне  включення $f\in C^{\psi}_{\beta}L_{1}$  при будь-якому $\beta\in\mathbb{R}$, еквівалентне твердженню про включення $f\in D^{\infty}$, де $D^{\infty}$ --- множина всіх нескінченно диференційовних $2\pi$-періодичних дійснозначних функцій.
А отже, множини $C^{\psi}_{\beta}L_{1}$  при $\psi\in \mathfrak{M}^{\alpha}$ (або $\psi\in\mathfrak{M}_{\infty}^{+}$),  є множинами нескінченно диференційовних періодичних функцій.  В  роботі  \cite{Stepanets_Serdyuk_Shydlich2008} було доведено включення
\begin{equation}\label{Mathfrak_M_alpha}
\mathfrak{M}_{\infty}^{+} \subset \mathfrak{M}^{\alpha}\subset \mathfrak{M}^{\infty}= \left\{\psi\in \mathfrak{M}:  \ \forall r>0 \ \ \lim\limits_{t\rightarrow\infty} t^{r}\psi(t)=0 \right\},
\end{equation}
яке означає, що функції $\psi$ із $\mathfrak{M}^{\alpha}$ спадають до нуля швидше за довільну степеневу функцію.

 Як показано в \cite[с. 166]{Stepanets1} для довільної функції $\psi$ із $\mathfrak{M}_{\infty}^{+}$ має місце порядкова рівність
\begin{equation}\label{OrderEstimates_theta_and_lambda}
\eta(t)-t \asymp \lambda(t), \ t\geq 1,
 \end{equation}
де $\lambda(t)$ --- характеристика вигляду
\begin{equation}\label{psi_lambda}
\lambda(t)=\lambda(\psi;t):=\frac{\psi(t)}{|\psi'(t)|}.
\end{equation}

В даному пункті нас буде цікавити випадок, коли для $\psi\in \mathfrak{M}^{\alpha}$, a характеристика $\lambda(t)$ монотонно зростає до нечскінченності при $t\rightarrow \infty$. При цих обмеженнях функції $\psi(t)$ спадають до нуля швидше за довільну степеневу функцію, але повільніше за будь-яку геометричну прогресію.

\begin{theorem}\label{theorem2_M}
Нехай  $\psi\in\mathfrak{M}$ i характеристики $\alpha(t)$ вигляду \eqref{psi_alpha} та  $\lambda(t)$ вигляду \eqref{psi_lambda} задовольняють умови
\begin{equation}\label{alphaTo_0}
\alpha(t)\downarrow0,  \ \ t\rightarrow\infty,
\end{equation}
\begin{equation}\label{lambdaTo_infty}
\lambda(t)\uparrow \infty, \ \ t\rightarrow\infty.
\end{equation}
Тоді,  довільної  функції
 $f\in C^{\psi}_{\beta}L_{1}$, $\beta\in\mathbb{R}$,  в кожній точці $x\in\mathbb{R}$  при всіх $n\in \mathbb{N}$ таких, що
\begin{equation}\label{alpha_1/4}
\alpha(n)\leq \frac{1}{4},
\end{equation}
має місце нерівність 
\begin{equation}\label{interpolation_ParticularCase_C^psi_Inequality}
|\tilde{\rho}_{n}(f;x)|
\leq
\frac{2}{\pi} \left| \sin \frac{2n-1}{2}x\right| 
\psi(n) \lambda(n)
\left( 1+ 4\alpha(n) +\frac{1}{\lambda(n)}
\right) E_{n}(f^{\psi}_{\beta})_{L_{1}}.
 \end{equation}

 Крім того, для довільної функції $f\in C^{\psi}_{\beta}L_{1}$ можна вказати функцію $\mathcal{F}(\cdot)=\mathcal{F}(f;n;x, \cdot)$ таку, що $E_{n}(\mathcal{F}^{\psi}_{\beta})_{L_1}=E_{n}(f^{\psi}_{\beta})_{L_1}$  і виконується наступна рівність:
\begin{align}\label{interpolation_particularCase_C^psi_Equality}
|\tilde{\rho}_{n}(\mathcal{F}; x)|
=
\frac{2}{\pi} \left| \sin \frac{2n-1}{2}x\right| 
\psi(n) \lambda(n)
 \left( 1+ \xi_{3}\alpha(n)+ \frac{\xi_{4}}{\lambda(n)}
\right)
E_{n}(f^{\psi}_{\beta})_{L_{1}},
\end{align}
де  $-4(1+2\pi)\leq \xi_{3}<\frac{8}{3}(1+\pi)$,  $-(1+2\pi)\leq \xi_{4}\leq 2(1+\pi)$.
\end{theorem}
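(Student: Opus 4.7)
The overall strategy will be to deduce Theorem~\ref{theorem2_M} from Theorem~\ref{Interpolation_2023_theorem_GeneralCase} by sharpening the tail expressions $\frac{1}{n}\sum_{k=n}^\infty k\psi(k)$ and $\sum_{k=n}^\infty \psi(k)+\frac{\xi}{n}\sum_{k=1}^\infty k\psi(k+n)$ into the single asymptotic quantity $\psi(n)\lambda(n)$ with explicit corrections of order $\psi(n)\lambda(n)\alpha(n)$ and $\psi(n)$. Throughout, set $S_1:=\sum_{k=n}^\infty\psi(k)$ and $S_2:=\frac{1}{n}\sum_{k=1}^\infty k\psi(k+n)$, so that $\frac{1}{n}\sum_{k=n}^\infty k\psi(k)=S_1+S_2$.

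For the inequality~\eqref{interpolation_ParticularCase_C^psi_Inequality}, the plan is to start from the modified form~\eqref{interpolation_generalCase_C^psi_Inequality_modified} of Theorem~\ref{Interpolation_2023_theorem_GeneralCase}, which already bounds $|\tilde\rho_n(f;x)|$ by $\frac{2}{\pi}\left|\sin\frac{2n-1}{2}x\right|(S_1+S_2)E_n(f^\psi_\beta)_{L_1}$. For the equality~\eqref{interpolation_particularCase_C^psi_Equality}, I would invoke the existence part of Theorem~\ref{Interpolation_2023_theorem_GeneralCase} to obtain an extremal $\mathcal{F}\in C^\psi_\beta L_1$ with $E_n(\mathcal{F}^\psi_\beta)_{L_1}=E_n(f^\psi_\beta)_{L_1}$ and
\[|\tilde\rho_n(\mathcal{F};x)|=\frac{2}{\pi}\left|\sin\frac{2n-1}{2}x\right|\left(S_1+\frac{\xi}{n}\sum_{k=1}^\infty k\psi(k+n)\right)E_n(f^\psi_\beta)_{L_1}\]
for some $\xi\in[-(1+2\pi),1]$, and then substitute two-sided asymptotic estimates for $S_1$ and $S_2$ to rewrite the right-hand side in the form $\psi(n)\lambda(n)\!\left(1+\xi_3\alpha(n)+\xi_4/\lambda(n)\right)$ with the claimed ranges of $\xi_3,\xi_4$.

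The analytic heart of the argument will therefore be an auxiliary lemma: under the hypotheses $\psi\in\mathfrak{M}$, $\alpha\downarrow 0$, $\lambda\uparrow\infty$, and $\alpha(n)\leq 1/4$,
\[\psi(n)\lambda(n)\leq S_1\leq\psi(n)\lambda(n)+\psi(n)+c_1\psi(n)\lambda(n)\alpha(n),\qquad 0\leq S_2\leq c_2\psi(n)\lambda(n)\alpha(n),\]
with explicit numerical constants $c_1,c_2$. I would derive these in the classical way: since $\psi\in\mathfrak{M}$ is convex and decreasing, the integral test yields $\int_n^\infty\psi(t)\,dt\leq S_1\leq\psi(n)+\int_n^\infty\psi(t)\,dt$, and integration by parts with $d(-\psi(t))=|\psi'(t)|\,dt$ and $\lambda(t)=\psi(t)/|\psi'(t)|$ rewrites the integral as
\[\int_n^\infty\psi(t)\,dt=\psi(n)\lambda(n)+\int_n^\infty\psi(t)\lambda'(t)\,dt.\]
The identity $\lambda(t)=t\alpha(t)$, together with $\alpha\downarrow 0$ and $\lambda\uparrow\infty$, forces $0\leq\lambda'(t)\leq\alpha(t)\leq\alpha(n)$ for $t\geq n$, so that the remainder integral is at most $\alpha(n)\int_n^\infty\psi(t)\,dt$; the hypothesis $\alpha(n)\leq 1/4$ then closes the resulting geometric estimate via $(1-\alpha(n))^{-1}\leq 4/3$. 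An analogous treatment of $\sum_{k=1}^\infty k\psi(k+n)=\sum_{j=n+1}^\infty(j-n)\psi(j)$, compared with $\int_n^\infty(t-n)\psi(t)\,dt$ and followed by a second integration by parts, yields the $O(\psi(n)\lambda(n)\alpha(n))$ bound for $S_2$.

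The main obstacle will be extracting the explicit sharp constants --- the factor $4$ in~\eqref{interpolation_ParticularCase_C^psi_Inequality} and the ranges $-4(1+2\pi)\leq\xi_3<\frac{8}{3}(1+\pi)$, $-(1+2\pi)\leq\xi_4\leq 2(1+\pi)$ in~\eqref{interpolation_particularCase_C^psi_Equality}. This will demand careful bookkeeping of both the signs and the magnitudes of the remainders from the auxiliary lemma, combined with the parameter $\xi\in[-(1+2\pi),1]$ inherited from Theorem~\ref{Interpolation_2023_theorem_GeneralCase}; several triangle-inequality steps are then needed to recombine them into the final expression. The threshold $\alpha(n)\leq 1/4$ enters precisely at the point where the geometric factor $(1-\alpha(n))^{-1}$ must be absorbed into the explicit constant $4$.
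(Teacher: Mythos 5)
Your proposal follows essentially the same route as the paper: both parts of Theorem~\ref{theorem2_M} are obtained by substituting asymptotic expansions of $\frac{1}{n}\sum_{k\ge n}k\psi(k)$, $\sum_{k\ge n}\psi(k)$ and $\frac{1}{n}\sum_{k\ge1}k\psi(k+n)$ in terms of $\psi(n)\lambda(n)$, $\alpha(n)$ into \eqref{interpolation_generalCase_C^psi_Inequality_modified} and \eqref{interpolation_generalCase_C^psi_Equality} of Theorem~\ref{Interpolation_2023_theorem_GeneralCase}. The only difference is that the paper imports these expansions ready-made as \eqref{form6}, \eqref{Sum_psi(k)}, \eqref{Sum_kpsi(k+n)} from \cite{SerdyukStepanyuk_UMJ_1_2023}, whereas you sketch their derivation via the integral test and integration by parts; your key observations ($\lambda'=\alpha+t\alpha'\le\alpha$ since $\alpha\downarrow$, and absorption of $(1-\alpha(n))^{-1}\le 4/3$ using \eqref{alpha_1/4}) are exactly the mechanism behind those cited formulas. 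One small caveat: your stated bound $0\le S_2\le c_2\psi(n)\lambda(n)\alpha(n)$ should carry an additional correction of order $\psi(n)$ (the $\Theta_7/\lambda(n)$ term in \eqref{Sum_kpsi(k+n)}), which is harmlessly absorbed into $\xi_4$ but is needed for the bookkeeping to close.
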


 \begin{proof}[Доведення теореми~\ref{theorem2_M}] 
 Умова \eqref{alphaTo_0} для $\psi\in\mathfrak{M}$ гарантує виконання всіх умов теореми~\ref{Interpolation_2023_theorem_GeneralCase}. Покажемо спочатку, що \eqref{interpolation_ParticularCase_C^psi_Inequality} випливає із \eqref{interpolation_generalCase_C^psi_Inequality_modified}.
 
 У роботі \cite[с. 561]{SerdyukStepanyuk_UMJ_1_2023} було доведено, що за виконання умов  \eqref{alphaTo_0}, \eqref{lambdaTo_infty} i \eqref{alpha_1/4} має місце наступна оцінка
 \begin{align}\label{form6}
\frac{1}{n}\sum\limits_{k=n}^{\infty}k\psi(k)
=
 \psi(n)  \lambda(n) \left(1+\Theta_{2}\alpha(n)+
\frac{\Theta_{3}}{\lambda(n)} \right).
 \end{align}
де  $0\leq \Theta_{2} \leq 4$,  $0\leq \Theta_{3}\leq 1$.
 З \eqref{interpolation_generalCase_C^psi_Inequality_modified} i \eqref{form6} випливає \eqref{interpolation_ParticularCase_C^psi_Inequality}.
 
 Щоб переконатись у справедливості рівності  \eqref{interpolation_particularCase_C^psi_Equality}  застосуємо формулу \eqref{interpolation_generalCase_C^psi_Equality} теореми~\ref{Interpolation_2023_theorem_GeneralCase} і використаємо формули (85) і (91) роботи \cite{SerdyukStepanyuk_UMJ_1_2023}, згідно з якими для довільної $\psi\in\mathfrak{M}$, що підпорядкована умовам \eqref{alphaTo_0} i \eqref{lambdaTo_infty} при всіх $n\in\mathbb{N}$, що задовольняють умову \eqref{alpha_1/4} мають місце оцінки
 \begin{equation}\label{Sum_psi(k)}
 \sum\limits_{k=n}^{\infty}\psi(k)= \psi(n)\lambda(n) \left( 1+ \Theta_{4}\alpha(n)+ \frac{\Theta_{5}}{\lambda(n)} \right), \ \ 0\leq \Theta_{4} \leq \frac{4}{3}, \ 0\leq \Theta_{5}\leq 1,
\end{equation}
\begin{align}\label{Sum_kpsi(k+n)}
\frac{1}{n}\sum\limits_{k=1}^{\infty}k\psi(k+n)
=
 \psi(n)  \lambda(n) \left( \Theta_{6}\alpha(n)+ \frac{\Theta_{7}}{\lambda(n)} \right), \ \ -\frac{4}{3}\leq \Theta_{6} \leq 4, \ -1\leq \Theta_{7}\leq 1.
 \end{align}

 З урахуванням \eqref{Sum_psi(k)} i \eqref{Sum_kpsi(k+n)} рівність \eqref{interpolation_generalCase_C^psi_Equality} теореми~\ref{Interpolation_2023_theorem_GeneralCase} можна записати у вигляді
 
 \begin{align}\label{interpolation_Particular CaseCase_C^psi_form1}
|\tilde{\rho}_{n}(\mathcal{F}; x)|
=&
\frac{2}{\pi}\left| \sin\frac{2n-1}{2}x \right|
 \left( 
\sum\limits_{k=n}^{\infty}\psi(k)+
 \frac{\xi }{n}\sum\limits_{k=1}^{\infty} k\psi(k+n)\right)
E_{n}(f^{\psi}_{\beta})_{L_{1}} \notag\\
=&
\frac{2}{\pi}\left| \sin\frac{2n-1}{2}x \right|
\psi(n)\lambda(n) \left( 1+
(\Theta_{4}+\xi\Theta_{6})\alpha(n) + (\Theta_{5}+\xi\Theta_{7}) \frac{1}{\lambda(n)}
\right)
E_{n}(f^{\psi}_{\beta})_{L_{1}}.
\end{align}
Із \eqref{interpolation_Particular CaseCase_C^psi_form1} випливає \eqref{interpolation_particularCase_C^psi_Equality}.
Теорему~\ref{theorem2_M} доведено.
 \end{proof}
 
Важливими множинами $C^{\psi}_{\beta}L_{1}$ для яких функції $\psi \, (\psi\in \mathfrak{M})$ задовольняють усі умови теореми~\ref{theorem2_M} є множини узагальнених інтегралів Пуассона $C^{\alpha,r}_{\beta}L_{1}$, $\alpha>0$, $\beta\in\mathbb{R}$, $r\in(0,1)$.
В цьому випадку для $\psi(t)=e^{-\alpha t^{r}}$ при довільних $t\geq 1$
\begin{equation}\label{lambda_alpha_generalizedPoisson_Integtals}
\lambda(t)=\frac{t^{1-r}}{\alpha r}, \ \ \alpha(t)=\frac{1}{\alpha 
r t^{r}},
 \end{equation}
а отже $\lambda(t)\uparrow\infty$, $\alpha(t)\downarrow 0$, $t\rightarrow\infty$. З урахуванням \eqref{lambda_alpha_generalizedPoisson_Integtals} умова \eqref{alpha_1/4} може бути записана у вигляді $n\geq \left(\frac{4}{\alpha r}\right)^{\frac{1}{r}}$.

\begin{corollary}\label{theorem_p=1}
Нехай $0<r<1$, $\alpha>0$, $\beta\in\mathbb{R}$. 
 Тоді, для довільної функції
 $f\in C^{\alpha,r}_{\beta}L_{1}$, $\beta\in\mathbb{R}$,  в кожній точці $x\in\mathbb{R}$ при всіх $n\in\mathbb{N}$ таких, що $n\geq  \left(\frac{4}{\alpha r}\right)^{\frac{1}{r}} $ виконується нерівність
 \begin{equation}\label{Theorem_Ineq1_p=1}
|\tilde{\rho}_{n}(f;x)|
\leq
 e^{-\alpha n^{r}}n^{1-r}
\left|\sin \frac{2n-1}{2}x \right| 
\Big(
\frac{2}{\pi\alpha r}+\mathcal{O}(1)\Big(\frac{1}{(\alpha r)^{2}}\frac{1}{n^{r}}+\frac{1}{n^{1-r}}\Big)\Big)E_{n}(f^{\alpha,r}_{\beta})_{L_{1}}.
 \end{equation}
 
 Крім того, для довільної функції  $f\in C^{\alpha,r}_{\beta}L_{1}$ можна вказати функцію ${\mathcal{F}(\cdot)=\mathcal{F}(f;n; x, \cdot)}$ з множини $C^{\alpha,r}_{\beta}L_{1}$ таку, що
  $E_{n}(\mathcal{F}^{\alpha,r}_{\beta})_{L_{1}}=E_{n}(f^{\alpha,r}_{\beta})_{L_{1}}$ 
і при  $n\geq n_{*}(\alpha,r,1)$  має місце рівність
 \begin{equation}\label{Theorem_Eq_p=1}
|\tilde{\rho}_{n}(\mathcal{F}; x)|
=
 e^{-\alpha n^{r}}n^{1-r}
\left|\sin \frac{2n-1}{2}x \right| 
\Big(
\frac{2}{\pi\alpha r}+\mathcal{O}(1)\Big(\frac{1}{(\alpha r)^{2}}\frac{1}{n^{r}}+\frac{1}{n^{1-r}}\Big)\Big)E_{n}(f^{\alpha,r}_{\beta})_{L_{1}}.
 \end{equation}  
В (\ref{Theorem_Ineq1_p=1})  і  (\ref{Theorem_Eq_p=1}) величини $\mathcal{O}(1)$ рівномірно обмежені по всіх розглядуваних параметрах.
\end{corollary}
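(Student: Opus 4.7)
The plan is to deduce the corollary directly from Theorem~\ref{theorem2_M} applied to the particular function $\psi(t)=e^{-\alpha t^{r}}$, $0<r<1$, $\alpha>0$. First I would verify that this $\psi$ belongs to $\mathfrak{M}$ and satisfies the two monotonicity hypotheses \eqref{alphaTo_0} and \eqref{lambdaTo_infty}: since $0<r<1$, the function $\psi$ is positive, convex and tends to $0$, and from the formulas already recorded in \eqref{lambda_alpha_generalizedPoisson_Integtals} one has $\lambda(t)=t^{1-r}/(\alpha r)\uparrow\infty$ and $\alpha(t)=1/(\alpha r t^{r})\downarrow 0$ as $t\to\infty$. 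The cutoff \eqref{alpha_1/4} then reads $1/(\alpha r n^{r})\leq 1/4$, i.e.\ $n\geq(4/(\alpha r))^{1/r}$, which is precisely the admissibility condition stated in the corollary.

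Having checked the hypotheses, I would substitute the explicit expressions for $\psi(n)$, $\lambda(n)$, $\alpha(n)$ into the general bound \eqref{interpolation_ParticularCase_C^psi_Inequality}. The product $\psi(n)\lambda(n)$ becomes $e^{-\alpha n^{r}}\cdot n^{1-r}/(\alpha r)$, while the bracket
\[
1+4\alpha(n)+\frac{1}{\lambda(n)}
\]
expands to $1+\dfrac{4}{\alpha r n^{r}}+\dfrac{\alpha r}{n^{1-r}}$. Pulling the factor $e^{-\alpha n^{r}}n^{1-r}$ in front and distributing $1/(\alpha r)$ over the bracket gives the main term $\dfrac{2}{\pi\alpha r}$ plus remainders of orders $\dfrac{1}{(\alpha r)^{2}n^{r}}$ and $\dfrac{1}{n^{1-r}}$, which is exactly \eqref{Theorem_Ineq1_p=1}.

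For the asymptotic equality \eqref{Theorem_Eq_p=1} I would repeat the same substitution in \eqref{interpolation_particularCase_C^psi_Equality}: the existence of the extremal function $\mathcal{F}\in C^{\alpha,r}_{\beta}L_{1}$ with $E_{n}(\mathcal{F}^{\alpha,r}_{\beta})_{L_{1}}=E_{n}(f^{\alpha,r}_{\beta})_{L_{1}}$ is inherited verbatim from Theorem~\ref{theorem2_M}, and the bracket $1+\xi_{3}\alpha(n)+\xi_{4}/\lambda(n)$ with bounded $\xi_{3},\xi_{4}$ produces the same $\mathcal{O}(1)$ remainders as above (with $\mathcal{O}(1)$ absorbing only absolute numerical constants and being uniform in $n$, $x$, $f$ and $\beta$).

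The argument is essentially a bookkeeping computation; the only step that requires attention is to make sure that when the factor $1/(\alpha r)$ is distributed through the bracketed error term the dependence on $\alpha r$ in each remainder is tracked correctly, so that both the leading constant $2/(\pi\alpha r)$ and the shape $\mathcal{O}\bigl((\alpha r)^{-2}n^{-r}+n^{r-1}\bigr)$ of the remainder emerge in the form displayed in \eqref{Theorem_Ineq1_p=1} and \eqref{Theorem_Eq_p=1}. Beyond that, no new analytic input is needed: the whole statement is a specialization of Theorem~\ref{theorem2_M} to generalized Poisson kernels of order $r\in(0,1)$.
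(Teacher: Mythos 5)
Your proposal is correct and follows exactly the route the paper itself takes: it verifies that $\psi(t)=e^{-\alpha t^{r}}$ satisfies the hypotheses of Theorem~\ref{theorem2_M} via the explicit formulas \eqref{lambda_alpha_generalizedPoisson_Integtals} (so that \eqref{alpha_1/4} becomes $n\geq(4/(\alpha r))^{1/r}$), and then obtains \eqref{Theorem_Ineq1_p=1} and \eqref{Theorem_Eq_p=1} by substituting $\psi(n)\lambda(n)=e^{-\alpha n^{r}}n^{1-r}/(\alpha r)$, $\alpha(n)=1/(\alpha r n^{r})$, $1/\lambda(n)=\alpha r/n^{1-r}$ into \eqref{interpolation_ParticularCase_C^psi_Inequality} and \eqref{interpolation_particularCase_C^psi_Equality} and absorbing the bounded factors $\xi_{3},\xi_{4}$ into $\mathcal{O}(1)$. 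Your bookkeeping of the $(\alpha r)^{-2}n^{-r}$ and $n^{r-1}$ remainders matches the paper's stated result.
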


Зауважимо також, що формули (\ref{Theorem_Ineq1_p=1})  і  (\ref{Theorem_Eq_p=1})  випливають також з  теореми 2.3 роботи авторів \cite{SerdyukStepanyuk2023_UMJ_No7}.

Можна навести цілий ряд прикладів функцй $\psi$ із $\mathfrak{M}$, для яких виконуються умови \eqref{alphaTo_0} i \eqref{lambdaTo_infty}  теореми~\ref{theorem2_M}. Зокрема: 

\begin{equation}\label{psi_1}
\psi(t)=(t+2)^{- \ln\ln (t+2)}, \ \ t\geq 1,
\end{equation}
\begin{equation}\label{psi_2}
\psi(t)=e^{- \ln^{2} (t+1)}, \ \ t\geq 1,
\end{equation}
\begin{equation}\label{psi_3}
\psi(t)=e^{- \frac{t+2}{\ln (t+2) }}, \ \ t\geq 1.
\end{equation}

Для усіх перелічених функцій $\psi$ теорема~\ref{theorem2_M} дозволяє записати асимптотично непокращувані нерівності типу Лебега для інтерполяційних поліномів Лагранжа на множинах $C^{\psi}_{\beta}L_{1}$, $\beta\in\mathbb{R}$.

\subsection{Наслідки з теореми~\ref{Interpolation_2023_theorem_GeneralCase} для множин аналітичних та цілих функцій}\label{Subsection_analytic_Section_corrolary_infinitelyDifferentiable}

В даному пункті будемо розглядати випадки, коли послідовності $\psi(k)$ спадають до нуля приблизно як геометричні прогресії або швидше них. Як показано в \cite[c. 32]{Step monog 1987},  \cite[c. 1697]{Stepanets_Serdyuk_Shydlich2008}, якщо при деяких $K$ i $\alpha$ $\psi(k)$ задовольняє умову
\begin{equation}\label{psi_4Inequality}
\psi(k)\leq Ke^{-\alpha k}, \ \ k\in\mathbb{N}, \ \ \alpha>0, \ \ K>0,
\end{equation}
то множини $ C^{\alpha,r}_{\beta}L_{1}$ складаються  з аналітичних функцій, які регулярно продовжуються у смугу $|\mathrm{Im} \, z |<\alpha$. Типовим прикладом функції $\psi(k)$, що задовольняє умову \eqref{psi_4Inequality} може слугувати функція
\begin{equation}\label{psi_4}
\psi(k)= e^{-\alpha k},  \ \ \alpha>0,
\end{equation}
яка породжує множину інтегралів Пуассона $C^{\alpha,1}_{\beta}L_{1}$.

Оскільки для функції вигляду \eqref{psi_4} виконуються рівності
\begin{align}\label{Sums_generalizedPoissonIntegrals_1}
&
\sum\limits_{k=0}^{\infty} \sum\limits_{\nu=(2k+1)n-k}^{\infty} \psi(\nu)
=
\sum\limits_{k=0}^{\infty} \sum\limits_{\nu=(2k+1)n-k}^{\infty} e^{-\alpha\nu}
=\sum\limits_{k=0}^{\infty}  \frac{e^{-\alpha (2k+1)n-k}}{1-e^{-\alpha}}
\notag \\
=&
 \frac{e^{-\alpha n}}{1-e^{-\alpha}} \sum\limits_{k=0}^{\infty} e^{-\alpha (2n-1)k}
 = \frac{e^{-\alpha n}}{(1-e^{-\alpha}) (1-e^{-\alpha (2n-1)}) };
\end{align}

\begin{align}\label{Sums_generalizedPoissonIntegrals_2}
\frac{1}{n}\sum\limits_{k=1}^{\infty} k\psi(k+n)
=\frac{1}{n}\sum\limits_{k=1}^{\infty} k e^{-\alpha (k+n)}=
\frac{e^{-\alpha n}}{n}  \sum\limits_{k=1}^{\infty} k e^{-\alpha k}
 = \frac{e^{-\alpha (n+1)}}{n(1-e^{-\alpha})^{2} },
\end{align}
то із теореми~\ref{Interpolation_2023_theorem_GeneralCase} одержуємо твердження.

\begin{theorem}\label{theorem_p=1_r=1}
Нехай $\alpha>0$ i $\beta\in\mathbb{R}$. 
 Тоді, для всіх $x\in\mathbb{R}$,  $n\in\mathbb{N}$ і довільної функції 
$f\in C^{\alpha,1}_{\beta}$
 має місце  нерівність
 \begin{equation}\label{Theorem_Ineq1_p=1_r=1}
|\tilde{\rho}_{n}(f;x)|
\leq
 \frac{2}{\pi} 
 \left|\sin \frac{2n-1}{2}x \right| 
\frac{e^{-\alpha n}}{(1-e^{-\alpha}) (1-e^{-\alpha (2n-1)}) }
E_{n}(f^{\alpha,1}_{\beta})_{L_{1}}.
 \end{equation}
 
 Крім того, для довільної функції  $f\in C^{\alpha,1}_{\beta}L_{1}$ можна вказати функцію ${\mathcal{F}(\cdot)=\mathcal{F}(f;n; x, \cdot)}$ з множини $C^{\alpha,1}_{\beta}L_{1}$ таку, що
  $E_{n}(\mathcal{F}^{\alpha,1}_{\beta})_{L_{1}}=E_{n}(f^{\alpha,1}_{\beta})_{L_{1}}$ 
і для якої виконується  рівність
 \begin{equation}\label{Theorem_Eq_p=1_r=1}
|\tilde{\rho}_{n}(\mathcal{F}; x)|
=
 \frac{2}{\pi}  \left|\sin \frac{2n-1}{2}x \right| 
 e^{-\alpha n}
\left(  \frac{1}{1-e^{-\alpha}}  + \frac{\xi}{n}  \frac{e^{-\alpha}}{(1-e^{-\alpha})^{2} } \right)
E_{n}(f^{\alpha,1}_{\beta})_{L_{1}},
 \end{equation}  
де для величини $\xi=\xi(f,n,\alpha, \beta;x)$ виконується оцінка $-(1+2\pi)\leq \xi\leq 1$.
\end{theorem}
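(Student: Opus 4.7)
The plan is to apply Theorem~\ref{Interpolation_2023_theorem_GeneralCase} directly with the specific choice $\psi(k)=e^{-\alpha k}$. For every $\alpha>0$ one has $\sum_{k=1}^{\infty}k\psi(k)=e^{-\alpha}/(1-e^{-\alpha})^{2}<\infty$ and $\psi(k)\geq0$, so the hypotheses of Theorem~\ref{Interpolation_2023_theorem_GeneralCase} hold for every $n\in\mathbb{N}$ and $\beta\in\mathbb{R}$. Thus both the bound \eqref{interpolation_generalCase_C^psi_Inequality} and the existence statement \eqref{interpolation_generalCase_C^psi_Equality} are at our disposal without any restriction on $n$.

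First I would establish the inequality \eqref{Theorem_Ineq1_p=1_r=1}. Specializing \eqref{interpolation_generalCase_C^psi_Inequality} to $\psi(\nu)=e^{-\alpha\nu}$, the double series on the right-hand side is precisely the quantity evaluated in \eqref{Sums_generalizedPoissonIntegrals_1}. Inserting the resulting closed form $e^{-\alpha n}/\bigl((1-e^{-\alpha})(1-e^{-\alpha(2n-1)})\bigr)$ as the coefficient of $E_{n}(f^{\alpha,1}_{\beta})_{L_{1}}$ yields \eqref{Theorem_Ineq1_p=1_r=1}.

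For the second assertion I would take $\mathcal{F}$ to be exactly the function provided by the second part of Theorem~\ref{Interpolation_2023_theorem_GeneralCase}; it automatically satisfies $E_{n}(\mathcal{F}^{\alpha,1}_{\beta})_{L_{1}}=E_{n}(f^{\alpha,1}_{\beta})_{L_{1}}$ and the pointwise identity \eqref{interpolation_generalCase_C^psi_Equality} with some $\xi\in[-(1+2\pi),1]$. It remains only to evaluate the two series occurring there for $\psi(k)=e^{-\alpha k}$: the geometric tail gives $\sum_{k=n}^{\infty}e^{-\alpha k}=e^{-\alpha n}/(1-e^{-\alpha})$, and the weighted tail was already computed in \eqref{Sums_generalizedPoissonIntegrals_2} as $e^{-\alpha(n+1)}/\bigl(n(1-e^{-\alpha})^{2}\bigr)$. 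Substituting these two values into \eqref{interpolation_generalCase_C^psi_Equality} produces \eqref{Theorem_Eq_p=1_r=1} verbatim, with the range of $\xi$ inherited from Theorem~\ref{Interpolation_2023_theorem_GeneralCase}.

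In this respect there is no genuine analytic obstacle: once Theorem~\ref{Interpolation_2023_theorem_GeneralCase} is taken as a black box, the argument collapses to evaluating geometric series. The one step that requires a moment of care is the nested sum $\sum_{k=0}^{\infty}\sum_{\nu=(2k+1)n-k}^{\infty}e^{-\alpha\nu}$ appearing in \eqref{Sums_generalizedPoissonIntegrals_1}: after summing the inner tail one obtains a geometric progression in $k$ whose ratio is $e^{-\alpha(2n-1)}$ rather than the naively expected $e^{-2\alpha n}$, because the lower summation index $(2k+1)n-k$ contains the term $-k$. This bookkeeping is precisely what produces the factor $(1-e^{-\alpha(2n-1)})$ in the denominator of \eqref{Theorem_Ineq1_p=1_r=1}.
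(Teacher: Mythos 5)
Your proposal is correct and coincides with the paper's own derivation: the paper also obtains Theorem~\ref{theorem_p=1_r=1} as an immediate consequence of Theorem~\ref{Interpolation_2023_theorem_GeneralCase} with $\psi(k)=e^{-\alpha k}$, using exactly the closed-form evaluations \eqref{Sums_generalizedPoissonIntegrals_1} and \eqref{Sums_generalizedPoissonIntegrals_2} of the two series. Your bookkeeping remark about the ratio $e^{-\alpha(2n-1)}$ coming from the lower index $(2k+1)n-k=n+(2n-1)k$ is also accurate.
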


Важливим частинним випадком функцій $\psi$, підпорядкованих умові \eqref{psi_4Inequality} є $\psi(k)$, що задовольняють умову Даламбера $\mathcal{D}_{q}$, $q\in[0,1)$:
\begin{equation}\label{DalamberCondition}
\lim\limits_{k\rightarrow\infty}\frac{\psi(k+1)}{\psi(k)}=q, \ \ \ \psi(k)>0.
\end{equation}

Якщо $\psi(k)$ задовольняє умову \eqref{DalamberCondition} при деякому $q\in[0,1)$, то будемо записувати, що $\psi\in \mathcal{D}_{q}$.

\begin{corollary}\label{Interpolation_2023_theorem_Dq}
Нехай $\psi\in \mathcal{D}_{q}$, $q\in(0,1)$,  $\beta\in\mathbb{R}$.
Тоді  для всіх $x\in\mathbb{R}$ і довільної  функції
 $f\in C^{\psi}_{\beta}L_{1}$  при  всіх номерах $n$ таких, що
\begin{equation}\label{Number_n_Dq}
\frac{1}{n}+\varepsilon_{n}<\frac{1-q}{2},
\end{equation}
де
\begin{equation}\label{Varepselon_n_Dq}
\varepsilon_{n}:=\sup\limits_{k\geq n} \left| \frac{\psi(k+1)}{\psi(k)}-q\right|,
\end{equation}
  має місце нерівність
\begin{align}\label{interpolation_Dq_Inequality}
|\tilde{\rho}_{n}(f;x)|
\leq
\left| \sin \frac{2n-1}{2}x\right| 
\psi(n) \left(
\frac{2}{\pi(1-q)}+\mathcal{O}(1)\Big(\frac{q}{n(1-q)^{2}}+\frac{\varepsilon_{n}}{(1-q)^{2}}\Big)\right)
E_{n}(f^{\psi}_{\beta})_{L_{1}}.
\end{align}
%where $F(a,b;c;d)$ is Gauss hypergeometric function.

Крім того, для довільної функції $f\in C^{\psi}_{\beta}L_{1}$ можна вказати функцію $\mathcal{F}(\cdot)=\mathcal{F}(f;n;x, \cdot)$ таку, що $E_{n}(\mathcal{F}^{\psi}_{\beta})_{L_1}=E_{n}(f^{\psi}_{\beta})_{L_1}$  і для якої виконується наступна рівність:
\begin{align}\label{interpolation_Dq_Equality}
|\tilde{\rho}_{n}(\mathcal{F}; x)|
=
 \left| \sin \frac{2n-1}{2}x\right| 
\psi(n) \left(
\frac{2}{\pi(1-q)}+\mathcal{O}(1)\Big(\frac{q}{n(1-q)^{2}}+\frac{\varepsilon_{n}}{(1-q)^{2}}\Big)\right)
E_{n}(f^{\psi}_{\beta})_{L_{1}}.
\end{align}
\end{corollary}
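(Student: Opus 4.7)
The plan is to deduce the corollary directly from Theorem~\ref{Interpolation_2023_theorem_GeneralCase} (and its modified inequality~\eqref{interpolation_generalCase_C^psi_Inequality_modified}) by asymptotically evaluating the two sums $\sum_{k=n}^{\infty}\psi(k)$ and $\frac{1}{n}\sum_{k=1}^{\infty}k\psi(k+n)$ under the hypothesis $\psi\in\mathcal{D}_q$. First, I would translate the definition of $\varepsilon_n$ in~\eqref{Varepselon_n_Dq} into the equivalent pointwise bound $(q-\varepsilon_n)\psi(k)\le\psi(k+1)\le(q+\varepsilon_n)\psi(k)$ valid for all $k\ge n$, and iterate it to obtain $(q-\varepsilon_n)^{j}\psi(n)\le\psi(n+j)\le(q+\varepsilon_n)^{j}\psi(n)$ for every $j\ge 0$. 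Hypothesis~\eqref{Number_n_Dq} yields $\varepsilon_n<(1-q)/2$, hence $0\le q-\varepsilon_n<q+\varepsilon_n<(1+q)/2<1$, so the relevant geometric series converge and $1/(1-q\mp\varepsilon_n)$ stays comparable to $1/(1-q)$.

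Second, I would evaluate the two sums. Sandwiching $\sum_{k=n}^{\infty}\psi(k)=\sum_{j\ge 0}\psi(n+j)$ between $\psi(n)/(1-q+\varepsilon_n)$ and $\psi(n)/(1-q-\varepsilon_n)$ and expanding $(1-q\mp\varepsilon_n)^{-1}$ about $\varepsilon_n=0$ (justified by $\varepsilon_n/(1-q)\le 1/2$) gives
\[
\sum_{k=n}^{\infty}\psi(k)=\frac{\psi(n)}{1-q}+\mathcal{O}(1)\,\frac{\varepsilon_n\,\psi(n)}{(1-q)^{2}}.
\]
Using the identity $\sum_{k\ge 1}kx^{k}=x/(1-x)^{2}$ and the same two-sided bound on $\psi(n+k)/\psi(n)$, one analogously obtains
\[
\frac{1}{n}\sum_{k=1}^{\infty}k\psi(k+n)=\mathcal{O}(1)\,\frac{q\,\psi(n)}{n(1-q)^{2}}+\mathcal{O}(1)\,\frac{\varepsilon_n\,\psi(n)}{n(1-q)^{2}}.
\]

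Third, substituting these two asymptotics into~\eqref{interpolation_generalCase_C^psi_Inequality_modified} of Theorem~\ref{Interpolation_2023_theorem_GeneralCase} yields~\eqref{interpolation_Dq_Inequality}, once one notices that the $\varepsilon_n/(n(1-q)^{2})$ contribution from the second sum is dominated by the $\varepsilon_n/(1-q)^{2}$ remainder coming from the first sum. For the equality~\eqref{interpolation_Dq_Equality} I would take the function $\mathcal{F}$ supplied by Theorem~\ref{Interpolation_2023_theorem_GeneralCase} and plug the same asymptotics into~\eqref{interpolation_generalCase_C^psi_Equality}; the bounded factor $\xi\in[-(1+2\pi),1]$ appearing in front of the second sum is harmless because its coefficient has already been collected into an $\mathcal{O}(1)$.

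The main obstacle is keeping the constants hidden in $\mathcal{O}(1)$ uniform in $\psi$, $q$, and $n$. Hypothesis~\eqref{Number_n_Dq} is tailored precisely for this: it forces $1/(1-q-\varepsilon_n)\le 2/(1-q)$ and $(q+\varepsilon_n)/(1-q-\varepsilon_n)^{2}\le 4(q+\varepsilon_n)/(1-q)^{2}$, which allows every bound involving $q+\varepsilon_n$ to be split into a clean main part depending only on $q$ plus a remainder linear in $\varepsilon_n$. Beyond this uniformity check the argument reduces to routine geometric-series bookkeeping.
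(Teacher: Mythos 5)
Your proposal is correct and follows essentially the same route as the paper: the paper likewise deduces the corollary by substituting asymptotics for $\sum_{k=n}^{\infty}\psi(k)$ and $\frac{1}{n}\sum_{k=1}^{\infty}k\psi(k+n)$ into \eqref{interpolation_generalCase_C^psi_Inequality} (via \eqref{Lemma_SumInterpolationPart_Inequality}) and \eqref{interpolation_generalCase_C^psi_Equality}, the only difference being that it imports the combined asymptotic formula \eqref{Formulas_Corrolary} ready-made from \cite{SerdyukStepanyuk_UMJ_1_2023} rather than rederiving it by the geometric-series sandwich you describe.
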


\begin{proof} 
В  роботі \cite[c. 557]{SerdyukStepanyuk_UMJ_1_2023} було показано, що коли $\psi\in \mathcal{ D}_{q}$, $q\in(0,1)$, то для всіх номерів $n$, що задовольняють умову \eqref{Number_n_Dq} справджуються рівності
 
 \begin{align}\label{Formulas_Corrolary}
&  \sum\limits_{k=n}^{\infty}\psi(k) +
\frac{\mathcal{O}(1)}{n}\sum\limits_{k=1}^{\infty} k\psi(k+n) \notag \\
  =&
  \psi(n)
\left( \frac{1}{1-q} +\mathcal{O}(1)
\left( \frac{\varepsilon_{n}}{(1-q)^{2}} + \frac{q}{n(1-q)^{2}}  \right)
\right).
\end{align}
Використавши нерівність \eqref{Lemma_SumInterpolationPart_Inequality} та оцінку \eqref{Formulas_Corrolary}, із формул \eqref{interpolation_generalCase_C^psi_Inequality} i \eqref{interpolation_generalCase_C^psi_Equality} теореми~\ref{Interpolation_2023_theorem_GeneralCase} одержуємо оцінки \eqref{interpolation_Dq_Inequality} i \eqref{interpolation_Dq_Equality} відповідно.
Наслідок \ref{Interpolation_2023_theorem_Dq} доведено.

\end{proof}

Важливими прикладами ядер $\Psi_{\beta}(t)$ вигляду \eqref{kernelPsi}
коефіцієнти $\psi(k)$ яких належать до множини ${\cal D}_q$,
$0<q<1$, є, зокрема,

$\bullet$ полігармонічні ядра Пуассона
\begin{equation}
\label{3Kern_PUAS_POLI}
P_{q,\beta}(l,t)=\sum\limits_{k=1}^{\infty}\psi_l(k)\cos\left(kt-\frac{\beta\pi}{2}\right),
l\in\mathbb{N}, \ \ \beta\in\mathbb{R},
\end{equation}
де
\begin{equation}
\label{3koef_KERN_P_POLI}
\psi_l(k)=q^k\left(1+\sum\limits_{j=1}^{l-1}\frac{(1-q^2)^j}{j!2^j}
\prod\limits_{\nu=0}^{j-1}(k+2\nu)\right), 0<q<1,
\end{equation}

$\bullet$  ядра аналітичних функцій
\begin{equation}
\label{3Kern_PUAS_TEPL} {\cal
P}_{q,\beta}(t)=\sum\limits_{k=1}^{\infty}\frac{2}{q^k+q^{-k}}\cos\left(kt-\frac{\beta\pi}{2}\right), \ \ 
0<q<1, \ \ \beta\in\mathbb{R},
\end{equation}

$\bullet$  ядра Неймана
\begin{equation}
\label{3Kern_Neim}
N_{q,\beta}(t)=\sum\limits_{k=1}^{\infty}\frac{q^k}{k}\cos\left(kt-\frac{\beta\pi}{2}\right), \ \ 
0<q<1, \ \  \beta\in\mathbb{R},
\end{equation}
та інші.

  Якщо $C^{\psi}_{\beta}L_{1}$  є множинами згорток з усіма наведеними ядрами, то наслідок~\ref{Interpolation_2023_theorem_Dq} дозволяє записати для них асимптотично непокращувані інтерполяційні нерівності типу Лебега. Зазначимо, що формули \eqref{interpolation_Dq_Inequality} i \eqref{interpolation_Dq_Equality} вперше були одержані в роботі \cite{Serdyuk2012}. В ній же були одержані асимптотично непокращувані нерівності типу Лебега для інтерполяційних поліномів на класах згорток $C^{\psi}_{\beta}L_{p}$, $1\leq p\leq\infty$, породжених ядрами \eqref{3Kern_PUAS_POLI} i \eqref{3Kern_Neim}, а також усіма ядрами $\Psi_{\beta}$ вигляду \eqref{kernelPsi}, де $\psi\in \mathcal{D}_{q}$, $q\in(0,1)$.

Розглянемо функцію $\psi(k)$ вигляду 
\begin{equation}\label{psi_even_oddCase}
\psi(k) = 
\begin{cases}
 q_{1}^{k}, & \text{якщо }
 k=2m-1, \ m\in\mathbb{N} \\
q_{2}^{k}, & \text{якщо }  k=2m,  m\in\mathbb{N},
  \end{cases}
\end{equation}
де числа $q_{1}$, $q_{2}$, такі, що
\begin{equation}\label{q1q2}
1>q_{1}>q_{2}>0.
\end{equation}

Зрозуміло, що функції $\psi$ вигляду \eqref{psi_even_oddCase} задовольняють умову \eqref{psi_4Inequality} при $\alpha>\ln \frac{1}{q_{1}}$, $K=1$, але водночас, ні при яких $q\in(0,1)$ не задовольняють ні умову Даламбера \eqref{DalamberCondition}, ні більш загальну, ніж $\mathcal{D}_{q}$ умову Коші $\mathcal{C}_{q}$:
\begin{equation}\label{CauschiConditiion}
\lim\limits_{k\rightarrow \infty} \sqrt[k]{\psi(k)} = q, \ \ q\in[0,1).
\end{equation}

В той же час теорема~\ref{Interpolation_2023_theorem_GeneralCase} дозволяє записати асимптотично непокращувані нерівності Лебега для інтерполяційних поліномів на множинах $C^{\psi}_{\beta}L_{1}$ для $\psi$ вигляду \eqref{psi_even_oddCase}. Для цього досить переконатись у справедливості граничного співвідношення \eqref{LimRelation}.

Неважко пересвідчитись, що для $\psi$ вигляду \eqref{psi_even_oddCase}
\begin{equation}\label{Sum_psi(k)_psi_even_oddCase}
\sum\limits_{k=n}^{\infty}\psi(k) = 
\begin{cases}
\frac{ q_{2}^{n}}{1- q_{2}^{2}}+  \frac{ q_{1}^{n+1}}{1- q_{1}^{2}}, & \text{якщо }
 n=2\ell, \ \ell\in\mathbb{N}, \\
\frac{ q_{1}^{n}}{1- q_{1}^{2}}+  \frac{ q_{2}^{n+1}}{1- q_{2}^{2}}, & \text{якщо }  n=2\ell-1, \ \ell\in\mathbb{N},
  \end{cases}
\end{equation}

\begin{equation}\label{1/nSum_kpsi(k+n)_psi_even_oddCase}
\frac{1}{n}\sum\limits_{k=1}^{\infty}k\psi(k+n) = 
\begin{cases}
\frac{1}{n} \left( \frac{2 q_{2}^{n+2}}{(1- q_{2}^{2})^{2}}+  \frac{ 2q_{1}^{n+3}}{(1- q_{1}^{2})^{2}} +  \frac{ q_{1}^{n+1}}{1- q_{1}^{2}} \right), & \text{якщо }
 n=2\ell, \ \ell\in\mathbb{N}, \\
\frac{1}{n} \left( \frac{ 2q_{1}^{n+2}}{(1- q_{1}^{2})^{2}}+  \frac{ 2q_{2}^{n+3}}{(1- q_{2}^{2})^{2}}+    \frac{ q_{2}^{n+1}}{1- q_{2}^{2}}  \right), & \text{якщо }  n=2\ell-1, \ \ell\in\mathbb{N}.
  \end{cases}
\end{equation}

Із \eqref{psi_alpha}, \eqref{Sum_psi(k)_psi_even_oddCase} i \eqref{1/nSum_kpsi(k+n)_psi_even_oddCase} одержуємо

\begin{equation}\label{Sum_psi(k)_psi_even_oddCase_LimitRelations}
\lim\limits_{n\rightarrow \infty}
\frac{\frac{1}{n}\sum\limits_{k=1}^{\infty}k\psi(k+n)}{\sum\limits_{k=n}^{\infty}\psi(k) }
= 
\begin{cases}
\lim\limits_{n\rightarrow\infty}   \frac{ \frac{1}{n} \left( \frac{ q_{1}^{n+2}}{1- q_{1}^{2}}+  \frac{ 2q_{1}^{n+3}}{(1- q_{1}^{2})^{2}}  \right) }{   \frac{ q_{1}^{n+1}}{1- q_{1}^{2}} }  =0, & \text{якщо }
 n=2\ell, \ \ell\in\mathbb{N}, \\
 \lim\limits_{n\rightarrow\infty} \frac{  \frac{1}{n}  \frac{ 2q_{1}^{n+2}}{(1- q_{1}^{2})^{2}} }{\frac{ q_{1}^{n}}{1- q_{1}^{2}}} =0, & \text{якщо }  n=2\ell-1, \ \ell\in\mathbb{N},
  \end{cases}
\end{equation}

Із  \eqref{Sum_psi(k)_psi_even_oddCase_LimitRelations} випливає \eqref{LimRelation}.

Розглянемо далі випадок, коли $\psi(k)$ задовольняють умову $\mathcal{D}_{0}$:
\begin{equation}\label{D0Condition}
\lim\limits_{k\rightarrow \infty}
\frac{\psi(k+1)}{\psi(k)}=0
\end{equation}

В цьому випадку множини $C^{\psi}_{\beta}L_{1}$ складаються із функцій регулярних в усій комплексній площині, тобто з цілих функцій (див., наприклад, \cite[с. 1703]{Stepanets_Serdyuk_Shydlich2008}).

Для $\psi\in \mathcal{D}_{0}$ із теореми~\ref{Interpolation_2023_theorem_GeneralCase} одержуємо наступне твердження.

\begin{corollary}\label{Interpolation_2023_theorem_D0}
Нехай  $\psi \in \mathcal{D}_{0}$,  $\beta\in\mathbb{R}$ i $n\in\mathbb{N}$.  Тоді, при всіх $x\in\mathbb{R}$ для  довільної  функції
 $f\in C^{\psi}_{\beta}L_{1}$  має місце нерівність
\begin{align}\label{interpolation_D0_Inequality}
|\tilde{\rho}_{n}(f;x)|
\leq
\frac{2}{\pi} \left| \sin \frac{2n-1}{2}x\right| 
\left( \psi(n)
+
\frac{1}{n} \sum\limits_{k=n+1}^{\infty} k\psi(k) \right)
E_{n}(f^{\psi}_{\beta})_{L_{1}}.
\end{align}
%where $F(a,b;c;d)$ is Gauss hypergeometric function.

Крім того, для довільної функції $f\in C^{\psi}_{\beta}L_{1}$, $\psi\in\mathcal{D}_{0}$ можна вказати функцію $\mathcal{F}(\cdot)=\mathcal{F}(f;n;x, \cdot)$ таку, що $E_{n}(\mathcal{F}^{\psi}_{\beta})_{L_1}=E_{n}(f^{\psi}_{\beta})_{L_1}$  і для якої виконується наступна рівність:
\begin{align}\label{interpolation_D0_Equality}
|\tilde{\rho}_{n}(\mathcal{F}; x)|
=
\frac{2}{\pi} \left| \sin \frac{2n-1}{2}x\right| 
\left( \psi(n)
+
\frac{\mathcal{O}(1)}{n} \sum\limits_{k=n+1}^{\infty} k\psi(k) \right)
E_{n}(f^{\psi}_{\beta})_{L_{1}}.
\end{align}
У формулі \eqref{interpolation_D0_Equality} величина $\mathcal{O}(1)$ рівномірно обмежена по всіх параметрах.
\end{corollary}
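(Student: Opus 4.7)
My plan is to deduce both assertions directly from Theorem~\ref{Interpolation_2023_theorem_GeneralCase} by a short algebraic rearrangement of the two series appearing there. The hypothesis $\psi\in\mathcal{D}_0$ itself does not enter the derivation of either bound; it is needed only afterwards to ensure that $\tfrac{1}{n}\sum\limits_{k=n+1}^{\infty}k\psi(k)=o(\psi(n))$, so that the resulting relations carry genuine asymptotic content.

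For the inequality \eqref{interpolation_D0_Inequality} I will invoke the modified form \eqref{interpolation_generalCase_C^psi_Inequality_modified} of Theorem~\ref{Interpolation_2023_theorem_GeneralCase} and peel off the leading term,
\[
\frac{1}{n}\sum\limits_{k=n}^{\infty}k\psi(k)=\psi(n)+\frac{1}{n}\sum\limits_{k=n+1}^{\infty}k\psi(k),
\]
which is already \eqref{interpolation_D0_Inequality}.

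For the equality \eqref{interpolation_D0_Equality} I will take $\mathcal{F}$ to be the extremal function supplied by Theorem~\ref{Interpolation_2023_theorem_GeneralCase} and transform the bracket in \eqref{interpolation_generalCase_C^psi_Equality}. The change of summation variable $j=k+n$ converts $\tfrac{1}{n}\sum\limits_{k=1}^{\infty}k\psi(k+n)$ into $\tfrac{1}{n}\sum\limits_{j=n+1}^{\infty}j\psi(j)-\sum\limits_{j=n+1}^{\infty}\psi(j)$; splitting off $\psi(n)$ from $\sum\limits_{k=n}^{\infty}\psi(k)$ then reduces the bracket to $\psi(n)+(1-\xi)\sum\limits_{j=n+1}^{\infty}\psi(j)+\tfrac{\xi}{n}\sum\limits_{j=n+1}^{\infty}j\psi(j)$. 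I then apply the elementary monotonicity bound $\sum\limits_{j=n+1}^{\infty}\psi(j)\leq\tfrac{1}{n}\sum\limits_{j=n+1}^{\infty}j\psi(j)$ to absorb the first summand into the second, producing an expression of the form $\psi(n)+\tfrac{\mathcal{O}(1)}{n}\sum\limits_{j=n+1}^{\infty}j\psi(j)$ with a constant depending only on the range $\xi\in[-(1+2\pi),1]$ guaranteed by Theorem~\ref{Interpolation_2023_theorem_GeneralCase}.

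No serious obstacle is anticipated; the only step requiring care is the clean bookkeeping of the absolute constant multiplying $\tfrac{1}{n}\sum\limits_{j=n+1}^{\infty}j\psi(j)$. The claim that this remainder is genuinely smaller than $\psi(n)$ when $\psi\in\mathcal{D}_0$ I would record in a short closing remark based on \eqref{D0Condition} and a geometric-series comparison: for any $\varepsilon>0$ and all sufficiently large $k$ one has $\psi(k+m)\leq\varepsilon^{m}\psi(k)$, and hence $\tfrac{1}{n}\sum\limits_{k>n}k\psi(k)\leq C\varepsilon\,\psi(n)$.
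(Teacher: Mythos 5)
Your proposal is correct and follows essentially the same route as the paper: both derive \eqref{interpolation_D0_Inequality} by splitting off the $k=n$ term in \eqref{interpolation_generalCase_C^psi_Inequality_modified}, and both obtain \eqref{interpolation_D0_Equality} from \eqref{interpolation_generalCase_C^psi_Equality} via the re-indexing $\frac{1}{n}\sum_{k\geq 1}k\psi(k+n)=\frac{1}{n}\sum_{k>n}(k-n)\psi(k)$ together with the bound $\sum_{k>n}\psi(k)\leq\frac{1}{n}\sum_{k>n}k\psi(k)$ to absorb everything into a single $\mathcal{O}(1)$ remainder. The only cosmetic difference is that the paper cites an earlier work for the limit relation $\frac{1}{n}\sum_{k>n}k\psi(k)=o(\psi(n))$ under $\psi\in\mathcal{D}_0$, while you sketch it directly by a geometric-series comparison; both are fine.
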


\begin{proof}
Нерівність  \eqref{interpolation_D0_Inequality} випливає безпосередньо із \eqref{interpolation_generalCase_C^psi_Inequality_modified}. Далі, оскільки
\begin{align}\label{interpolation_D0_Equality_ProofEq1}
&\sum\limits_{k=1}^{\infty}\psi(k)
+
\frac{\mathcal{O}(1)}{n}  \sum\limits_{k=1}^{\infty} k\psi(k+n)
=
\psi(n)+ \frac{\mathcal{O}(1)}{n} \left(  n  \sum\limits_{k=n+1}^{\infty} \psi(k)
+ \sum\limits_{k=1}^{\infty} k\psi(k+n)
\right) \notag \\
=&
\psi(n)+ \frac{\mathcal{O}(1)}{n} \left(  n  \sum\limits_{k=n+1}^{\infty} \psi(k)
+ \sum\limits_{k=n+1}^{\infty} (k-n)\psi(k)
\right)
=\psi(n)+ \frac{\mathcal{O}(1)}{n} \sum\limits_{k=n+1}^{\infty} k\psi(k),
\end{align}
то \eqref{interpolation_D0_Equality} є наслідком із \eqref{interpolation_generalCase_C^psi_Equality}. Той факт, що  \eqref{interpolation_D0_Equality} є асимптотичною рівністю при $n\rightarrow\infty$ випливає із формули

\begin{equation}\label{LimitRelations_Sum/psi}
\lim\limits_{n\rightarrow \infty}
\frac{\frac{1}{n}\sum\limits_{k=n+1}^{\infty}k\psi(k)}{\psi(n) } =0, \ \ \ \psi\in\mathcal{D}_{0},
\end{equation}
яка була встановлена в  \cite[с. 553]{SerdyukStepanyuk_UMJ_1_2023}.

%Формули \eqref{interpolation_D0_Inequality} i \eqref{interpolation_D0_Equality} випливають безпосередньо з \eqref{interpolation_generalCase_C^psi_Inequality}, \eqref{interpolation_generalCase_C^psi_Equality}, \eqref{Lemma_SumInterpolationPart_Inequality} та з доведення наслідку 3.1 роботи \cite{SerdyukStepanyuk_UMJ_1_2023}.

\end{proof}

Зауважимо, що вперше асимптотично непокращувані нерівності типу Лебега для інтерполяційних поліномів Лагранжа на множинах $C^{\psi}_{\beta}L_{1}$ за умови $\psi\in\mathcal{D}_{0}$ були одержані в \cite{Serdyuk2012}. Там же були наведені зазначені нерівності і на множинах узагальнених інтегралів Пуассона $C^{\psi}_{\beta}L_{1}$ при $\alpha>0$, $r>1$, $\beta\in \mathbb{R}$. При цьому вказані результати одержуються як частинний випадок ($p=1$) більш загальних інтерполяційних нерівностей типу Лебега, які формулюються для множин $C^{\psi}_{\beta}L_{p}$, $1\leq p \leq\infty$, $\psi\in\mathcal{D}_{0}$.

Зазначимо також, що можна навести приклади функцій $\psi(k)$, для яких
\begin{equation}\label{CauschiConditiion=0}
\lim\limits_{k\rightarrow \infty} \sqrt[k]{\psi(k)} = 0,
\end{equation}
які не задовольняють умову $\mathcal{D}_{0}$ і для яких виконується граничне співвідношення \eqref{LimRelation}. В силу \eqref{CauschiConditiion=0}, як випливає з \cite[с. 141]{Stepanets1} множини $C^{\psi}_{\beta}L_{1}$  є множинами цілих функцій і для них в силу \eqref{LimRelation} теорема~\ref{Interpolation_2023_theorem_GeneralCase} дозволяє записати асимптотично точні нерівності Лебега для інтерполяційних поліномів Лагранжа, не зважаючи на те, що 
$\psi \notin \mathcal{D}_{0}$.

%\section{Розв'язок задачі Колмогорова-Нікольського для інтерполяційних поліномів Лагранжа на класах нескінченно диференційовних функцій $C^{\psi}_{\beta,1}$ }

\section{Розв'язок задачі Колмогорова--Нікольського для інтерполяційних поліномів Лагранжа на класах згорток нескінченно диференційовних, аналітичних   та цілих функцій }\label{corrolarySection_analyticFunctions}

Наведемо приклади важливих функціональних компактів $C^{\psi}_{\beta,1}$, для яких формула \eqref{sup_interpolation_generalCase_C^psi_Equality} з теореми~\ref{sup_Interpolation_2023_theorem_GeneralCase} дозволяє записати асимптотичні  рівності для $\tilde{\mathcal{E}}_{n}(C^{\psi}_{\beta,1}; x)_{C}$ при $n\rightarrow\infty$.

\subsection{Наслідки з теореми~\ref{sup_Interpolation_2023_theorem_GeneralCase} для класів нескінченно диференційовних функцій}\label{Subsection_corrolarySection_analyticFunctions}

В наступному твердженні міститься асимптотична рівність для величин $\tilde{\mathcal{E}}_{n}(C^{\psi}_{\beta,1};x)_{C}$, $\psi\in \mathfrak{M}^{\alpha}$ при $n\rightarrow\infty$ за умови $\lambda(t)\uparrow \infty$.

\begin{theorem}\label{theorem_sup_Interpolation}
Нехай $\beta\in\mathbb{R}$, $\psi\in\mathfrak{M}^{\alpha}$ i для характеристик $\alpha(t)$ i $\lambda(t)$ означених формулами \eqref{psi_alpha} i  \eqref{psi_lambda}  відповідно
виконуються  умови \eqref{alphaTo_0} i \eqref{lambdaTo_infty}.
Тоді при всіх $n\in\mathbb{N}$  підпорядкованих умові \eqref{alpha_1/4}
має місце оцінка
\begin{equation}\label{Theorem_sup_psi}
\tilde{\mathcal{E}}_{n}(C^{\psi}_{\beta,1};x)=
\frac{2}{\pi} \left| \sin \frac{2n-1}{2}x\right| 
\psi(n) \lambda(n)
\left( 
 1+ \xi_{1}^{*}\alpha(n)+ \frac{\xi_{2}^{*}}{\lambda(n)}
\right),
\end{equation}
де  $-4(1+\pi)\leq \xi_{1}^{*}\leq \frac{4}{3}(2+\pi)$,  $-(1+\pi)\leq \xi_{2}^{*}\leq 2+\pi$.
\end{theorem}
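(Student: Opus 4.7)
\medskip
\noindent\textbf{Proof proposal.} The plan is to derive \eqref{Theorem_sup_psi} directly from Theorem~\ref{sup_Interpolation_2023_theorem_GeneralCase} by substituting the two asymptotic expansions \eqref{Sum_psi(k)} and \eqref{Sum_kpsi(k+n)}, already established in the proof of Theorem~\ref{theorem2_M} and taken from formulas (85) and (91) of \cite{SerdyukStepanyuk_UMJ_1_2023}, into the exact formula \eqref{sup_interpolation_generalCase_C^psi_Equality}. The hypotheses \eqref{alphaTo_0}, \eqref{lambdaTo_infty} and \eqref{alpha_1/4} of the present theorem are exactly those under which these expansions are valid, so no additional work is required to justify their use.

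After the substitution, $\tilde{\mathcal{E}}_{n}(C^{\psi}_{\beta,1};x)$ takes the form
\[
\frac{2}{\pi}\left|\sin\frac{2n-1}{2}x\right|\psi(n)\lambda(n)
\left( 1 + (\Theta_{4} + \Theta\Theta_{6})\alpha(n) + (\Theta_{5} + \Theta\Theta_{7})\frac{1}{\lambda(n)} \right),
\]
where $\Theta \in [-(1+\pi),1]$ is inherited from Theorem~\ref{sup_Interpolation_2023_theorem_GeneralCase}, $\Theta_{4}\in[0,\frac{4}{3}]$ and $\Theta_{5}\in[0,1]$ come from \eqref{Sum_psi(k)}, while $\Theta_{6}\in[-\frac{4}{3},4]$ and $\Theta_{7}\in[-1,1]$ come from \eqref{Sum_kpsi(k+n)}. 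Setting $\xi_{1}^{*}:=\Theta_{4}+\Theta\Theta_{6}$ and $\xi_{2}^{*}:=\Theta_{5}+\Theta\Theta_{7}$, what remains is to determine the admissible ranges of these combinations.

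The only step requiring any real care is the sign analysis for the product $\Theta\Theta_{6}$, because the interval for $\Theta$ is asymmetric about zero. Splitting into the four sign combinations, the maximum of $\Theta\Theta_{6}$ is attained at $\Theta=-(1+\pi)$, $\Theta_{6}=-\frac{4}{3}$ and equals $\frac{4(1+\pi)}{3}$; the minimum is attained at $\Theta=-(1+\pi)$, $\Theta_{6}=4$ and equals $-4(1+\pi)$. Adding $\Theta_{4}\in[0,\frac{4}{3}]$ gives $\xi_{1}^{*}\in[-4(1+\pi),\frac{4}{3}(2+\pi)]$, which matches the claimed range. The analogous analysis for $\Theta\Theta_{7}$, now with the symmetric interval $[-1,1]$, gives $\Theta\Theta_{7}\in[-(1+\pi),1+\pi]$, and adding $\Theta_{5}\in[0,1]$ yields $\xi_{2}^{*}\in[-(1+\pi),2+\pi]$.

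The main (and rather mild) obstacle is therefore purely algebraic: one must track signs carefully in the case analysis to pick up the extra factor $1+\pi$ in the upper bound for $\xi_{1}^{*}$, rather than losing it to a symmetric estimate of the form $|\Theta\Theta_{6}|\leq 4(1+\pi)$. Everything else is a direct substitution of previously proven asymptotic formulas into the exact identity of Theorem~\ref{sup_Interpolation_2023_theorem_GeneralCase}, so no new analytic ingredients beyond those already in the proofs of Theorems~\ref{Interpolation_2023_theorem_GeneralCase}, \ref{sup_Interpolation_2023_theorem_GeneralCase} and \ref{theorem2_M} are needed.
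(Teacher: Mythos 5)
Your proposal is correct and follows essentially the same route as the paper: substitute the expansions \eqref{Sum_psi(k)} and \eqref{Sum_kpsi(k+n)} into the exact formula \eqref{sup_interpolation_generalCase_C^psi_Equality} of Theorem~\ref{sup_Interpolation_2023_theorem_GeneralCase} and set $\xi_{1}^{*}=\Theta_{4}+\Theta\Theta_{6}$, $\xi_{2}^{*}=\Theta_{5}+\Theta\Theta_{7}$, exactly as in \eqref{Theorem_sup_psi_proof}. Your sign analysis of the products $\Theta\Theta_{6}$ and $\Theta\Theta_{7}$ (which the paper leaves implicit) is accurate and reproduces the stated ranges.
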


\begin{proof}[Доведення теореми~\ref{theorem_sup_Interpolation}]

За виконання умов теореми~\ref{theorem_sup_Interpolation} $\psi\in\mathfrak{M}^{\alpha}$ тому, як випливає із \eqref{Mathfrak_M_alpha}, справджуються всі умови теореми~\ref{sup_Interpolation_2023_theorem_GeneralCase}, згідно з якою має місце рівність \eqref{sup_interpolation_generalCase_C^psi_Equality}.

Далі, як зазначалось при доведенні теореми~\ref{sup_Interpolation_2023_theorem_GeneralCase} за виконання умов \eqref{alphaTo_0} i \eqref{lambdaTo_infty} при всіх номерах $n$, що задовольняють нерівність \eqref{alpha_1/4} мають місце оцінки \eqref{Sum_psi(k)} i \eqref{Sum_kpsi(k+n)}. Тому в силу 
\eqref{sup_interpolation_generalCase_C^psi_Equality} для довільних $x\in\mathbb{R}$
\begin{align}\label{Theorem_sup_psi_proof}
&\tilde{\mathcal{E}}_{n}(C^{\psi}_{\beta,1};x) \!= \!
\frac{2}{\pi} \left| \sin \frac{2n\!-\!1}{2}x\right| \!\!
\left(
\psi(n) \lambda(n) \!\!
\left( \!
 1\!+\! \Theta_{4}\alpha(n)+\! \!\frac{\Theta_{5}}{\lambda(n)}
\right) \!+\!
\Theta\psi(n)\lambda(n)\left( 
 \Theta_{6}\alpha(n)\!+\! \frac{\Theta_{7}}{\lambda(n)}
\right) \!\! \right) \notag \\
=&
\frac{2}{\pi} \left| \sin \frac{2n-1}{2}x\right| 
\psi(n) \lambda(n)
\left( 
 1\!+\! (\Theta_{4}+\Theta\Theta_{6})\alpha(n)+ (\Theta_{5}+\Theta\Theta_{7}) \frac{1}{\lambda(n)}
\right).
\end{align}

Оцінка \eqref{Theorem_sup_psi} випливає із \eqref{Theorem_sup_psi_proof}. Теорему~\ref{theorem_sup_Interpolation} доведено.

\end{proof}

  Наведемо наслідок з теореми~\ref{theorem_sup_Interpolation} у випадку, коли $\psi(t)=e^{-\alpha t^{-r}}$,  $\alpha>0$, $0<r< 1$, тобто коли класи $C^{\psi}_{\beta,1} $ є класами узагальнених інтегралів Пуассона 
$C^{\alpha,r}_{\beta,1}$. Як уже зазначалось раніше, в цьому випадку характеристики $\alpha(t)$ i $\lambda(t)$, $t\geq 1$, мають вигляд \eqref{lambda_alpha_generalizedPoisson_Integtals}, а нерівність \eqref{alpha_1/4} еквівалентна нерівності  $n\geq \left(\frac{4}{\alpha r} \right)^{\frac{1}{r}}$. Тому із теореми~\ref{theorem_sup_Interpolation}  одержуємо наступне твердження

\begin{corollary}\label{cor00}
Нехай $0<r<1$, $\alpha>0$, $\beta\in\mathbb{R}$, $n \in \mathbb{N}$. Тоді при всіх  $x\in\mathbb{R}$ і номерів $n$ таких, що $n\geq \left(\frac{4}{\alpha r} \right)^{\frac{1}{r}}$ справедлива рівномірна по  всіх розглядуваних параметрах оцінка
\begin{equation}\label{f112}
\tilde{\mathcal{E}}_{n}(C^{\alpha,r}_{\beta,1};x)= 
\frac{2}{\pi} \left| \sin \frac{2n-1}{2}x\right|
e^{-\alpha n^{r}}   n^{1-r} \Big(
\frac{1}{\pi\alpha r }+\mathcal{O}(1)\Big(\frac{1}{(\alpha r)^{2}}\frac{1}{n^{r}}+\frac{1}{n^{1-r}}\Big)\Big).
\end{equation}
\end{corollary}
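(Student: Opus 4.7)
����� ��� ��� �������������� ��������������� ����� $\psi(t)=e^{-\alpha t^{r}}$ ��� ������ ����� $\alpha>0$ i $0<r<1$. ���, ����� ������ ��������, �� $\psi\in\mathfrak{M}^{\alpha}$ i ��� �������������� $\alpha(\psi;t)$ i $\lambda(\psi;t)$, �������� ��������� \eqref{psi_alpha} i \eqref{psi_lambda}, ������������� ������ ���� \eqref{alphaTo_0} i \eqref{lambdaTo_infty} �������~\ref{theorem_sup_Interpolation}. ��� ����� ������� ������ ��� �����������, ������� �������� � \eqref{lambda_alpha_generalizedPoisson_Integtals}, � ���
\begin{equation*}
\lambda(t)=\frac{t^{1-r}}{\alpha r}, \qquad \alpha(t)=\frac{1}{\alpha r t^{r}},
\end{equation*}
� ��� �� ����� $0<r<1$ ���������� $\lambda(t)\uparrow\infty$ � $\alpha(t)\downarrow 0$ ��� $t\rightarrow\infty$.

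���, �������� ����������� ������������ ����� \eqref{alpha_1/4}: ��������� $\alpha(n)=\frac{1}{\alpha r n^{r}}\leq \frac{1}{4}$ ����������� ���������� $n\geq \left(\frac{4}{\alpha r}\right)^{1/r}$, �� � � ������� ������������ �������. ���� ��� ����� $n$  ����������� ��� ������ ����������� �������~\ref{theorem_sup_Interpolation}, � ����, � ���� \eqref{Theorem_sup_psi} ���������
\begin{equation*}
\tilde{\mathcal{E}}_{n}(C^{\alpha,r}_{\beta,1};x)=
\frac{2}{\pi} \left| \sin \frac{2n-1}{2}x\right|  e^{-\alpha n^{r}}\frac{n^{1-r}}{\alpha r}
\left(1+\xi_{1}^{*}\alpha(n)+\frac{\xi_{2}^{*}}{\lambda(n)}\right),
\end{equation*}
�� $\xi_{1}^{*}$ i $\xi_{2}^{*}$ � ���������� �������� �������~\ref{theorem_sup_Interpolation}.

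�� ������� ���� ������ ������ �� ������� ���������� ������ $e^{-\alpha n^{r}} n^{1-r}$ �� ������������ ������, �� � ������� ��������� ��������� $\xi_{1}^{*}\alpha(n)/(\alpha r)$ ������ ����� ������� $\mathcal{O}(1)/((\alpha r)^{2} n^{r})$, � ��������� $\xi_{2}^{*}/(\alpha r \lambda(n))$ --- ����� ������� $\mathcal{O}(1)/n^{1-r}$ � ����������� ���������� ������ �� $\alpha,r,\beta,x$. ϳ��� ����������� �������� ��������� ������ \eqref{f112}. ������ ����� ���������, ���� ��������� ������������� ���� ��������� ������ �� ����������� ���������� �� ��������� �������: ��� $r<1/2$ �������� ���������� ������� $\alpha(n)$, � ��� $r>1/2$ --- ������� $1/\lambda(n)$; ����� ����� ���� �� ���������� ������ ��������� � ������������ ������� � \eqref{f112}.
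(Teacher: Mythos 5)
Your proof is correct and follows essentially the same route as the paper: substitute $\psi(t)=e^{-\alpha t^{r}}$ into Theorem~\ref{theorem_sup_Interpolation}, use the explicit expressions \eqref{lambda_alpha_generalizedPoisson_Integtals} for $\alpha(t)$ and $\lambda(t)$, observe that condition \eqref{alpha_1/4} reduces to $n\geq\left(\frac{4}{\alpha r}\right)^{1/r}$, and absorb the two remainder terms into $\mathcal{O}(1)$. Note only that your computation yields the leading constant $\frac{2}{\pi\alpha r}$ (in agreement with \eqref{Theorem_Ineq1_p=1} and \eqref{Theorem_Eq_p=1}), whereas \eqref{f112} as printed shows $\frac{2}{\pi}\cdot\frac{1}{\pi\alpha r}$, i.e. an apparent extra factor of $\pi$ in the paper's display --- this is a misprint in the statement, not a flaw in your argument.
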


Зазначимо, що асимптотичні рівності  для величин $\tilde{\mathcal{E}}_{n}(C^{\alpha,r}_{\beta,p};x)$, $r\in(0,1)$ при \cite{StepanetsSerdyuk2000Zb},
\cite{L_Serdyuk_2004_nd},
\cite{SerdyukStepanyuk2023_UMJ_No7}.

Наведемо ще декілька прикладів застосування теореми~\ref{theorem_sup_Interpolation} для різних функцій $\psi$ із $\mathfrak{M}$, які задовольняють умовам \eqref{alphaTo_0} і \eqref{lambdaTo_infty}. А саме розглянемо $\psi$ вигляду \eqref{psi_1}, \eqref{psi_2}, \eqref{psi_3}.

Для зазначених функцій $\psi(t)$  характеристики 
$\lambda(t)$  i $\alpha(t)$  відображено в наступній таблиці:

\begin{center}\label{Tabl}
%\begin{tiny}
%\begin{table}[H]
%\begin{center}
%\begin{large}
\begin{tabular}{|c|c|c|c|c|}
	\hline
№ &  $\psi(t)$ &  $ \alpha(t)$ & $\lambda(t)$    \\
\hline
1. & $(t+2)^{- \ln\ln (t+2)}$ & $\frac{t+2}{t}\frac{1}{1+\ln\ln(t+2)}$  & $\frac{t+2}{1+\ln\ln(t+2)}$   \\
&  &   &    \\
\hline
2. & $e^{- \ln^{2} (t+1)}$ & $\frac{t+1}{t}\frac{1}{2\ln(t+1)}$  & $\frac{t+1}{2\ln(t+1)}$   \\
&  &   &    \\
\hline
3. & $e^{- \frac{t+2}{\ln (t+2) }}$ &  $\frac{\ln^{2}(t+2)}{t(\ln(t+2)-1)}$ & $\frac{\ln^{2}(t+2)}{\ln(t+2)-1}$  \\
&  &   &    \\
\hline
\end{tabular}
%\end{large}
\end{center}
%\end{table}

Із теореми~\ref{theorem_sup_Interpolation} з урахуванням наведених в таблиці значень $\alpha(t)$ i $\lambda(t)$, отримуємо твердження.

\begin{corollary}\label{cor3}
Нехай $\psi(k)=(k+2)^{- \ln\ln (k+2)}$, $k=1,2,...,$ $\beta\in\mathbb{R}$ і $n\in\mathbb{N}$. Тоді для всіх $x\in\mathbb{R}$ при $n\rightarrow\infty$ виконується  асимптотична рівність
\begin{equation}\label{f13}
\tilde{\mathcal{E}}_{n}(C^{\psi}_{\beta,1};x) =
\frac{2}{\pi} \left| \sin \frac{2n-1}{2}x\right| 
\psi(n)\frac{n}{\ln\ln (n+2)}\left( 1+\mathcal{O}(1)\frac{n}{\ln \ln(n+2)}\right).
\end{equation}
\end{corollary}

\begin{corollary}\label{cor4}
Нехай  $\psi(k)=e^{-\ln ^{2}(k+1) }$,  $k=1,2,...,$ $\beta\in\mathbb{R}$ і $n\in\mathbb{N}$. Тоді  для всіх $x\in\mathbb{R}$ при $n\rightarrow\infty$ має місце  асимптотична рівність
\begin{equation}\label{f16}
\tilde{\mathcal{E}}_{n}(C^{\psi}_{\beta,1};x) =
\frac{1}{\pi}\left| \sin \frac{2n-1}{2}x\right| \frac{\psi(n) n}{\ln (n+1)} \left( 1
+\mathcal{O}(1)\frac{1}{\ln(n+1)}\right) .
\end{equation}
\end{corollary}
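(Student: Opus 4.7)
The plan is to reduce Corollary~\ref{cor4} to Theorem~\ref{theorem_sup_Interpolation} by substituting $\psi(t)=e^{-\ln^{2}(t+1)}$ together with the characteristics $\alpha(t)$ and $\lambda(t)$ recorded in the table preceding Corollary~\ref{cor3}, and then simplifying the resulting asymptotic expansion.

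First I would verify the hypotheses of Theorem~\ref{theorem_sup_Interpolation}. Positivity, continuity and $\psi(t)\to 0$ as $t\to\infty$ are immediate; convexity on $[1,\infty)$ follows from
\begin{equation*}
\psi''(t) = \frac{e^{-\ln^{2}(t+1)}}{(t+1)^{2}}\bigl(4\ln^{2}(t+1)+2\ln(t+1)-2\bigr)\geq 0,
\end{equation*}
since $\ln(t+1)\geq \ln 2>1/2$ on $[1,\infty)$; thus $\psi\in\mathfrak{M}$. From the table, $\alpha(t)=\dfrac{t+1}{2t\ln(t+1)}$ and $\lambda(t)=\dfrac{t+1}{2\ln(t+1)}$; a short logarithmic-derivative computation confirms $\alpha(t)\downarrow 0$ and $\lambda(t)\uparrow\infty$ as $t\to\infty$, so $\psi\in\mathfrak{M}^{\alpha}$ and conditions \eqref{alphaTo_0}, \eqref{lambdaTo_infty} are satisfied, while the bound \eqref{alpha_1/4} holds for all sufficiently large $n$ since $\alpha(n)\sim 1/(2\ln(n+1))$.

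Next I would substitute into \eqref{Theorem_sup_psi}. The leading factor becomes
\begin{equation*}
\frac{2}{\pi}\,\psi(n)\lambda(n) \;=\; \frac{(n+1)\psi(n)}{\pi\ln(n+1)} \;=\; \frac{n\,\psi(n)}{\pi\ln(n+1)}\Bigl(1+\tfrac{1}{n}\Bigr),
\end{equation*}
while the bracket in \eqref{Theorem_sup_psi} equals $1+\mathcal{O}(\alpha(n))+\mathcal{O}(1/\lambda(n))$ with $\alpha(n)\asymp 1/\ln(n+1)$ and $1/\lambda(n)\asymp \ln(n+1)/n$. For large $n$ the first of these error terms dominates the second, and also the $1/n$ correction coming from $(n+1)/n$, so all error contributions collapse into a single $\mathcal{O}(1/\ln(n+1))$; collecting everything produces exactly \eqref{f16}.

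I do not expect any genuine obstacle. The only point requiring a moment of care is to confirm that $1/\lambda(n)=2\ln(n+1)/(n+1)$ is indeed dominated by $\alpha(n)\asymp 1/\ln(n+1)$, equivalently $\ln^{2}(n+1)=o(n)$, which is elementary. Everything else is bookkeeping, and the corollary is a direct specialisation of Theorem~\ref{theorem_sup_Interpolation} to the specific $\psi$ of \eqref{psi_2}.
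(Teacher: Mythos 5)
Your proposal is correct and follows exactly the paper's route: the paper derives Corollary~\ref{cor4} by substituting the tabulated $\alpha(t)=\frac{t+1}{t}\frac{1}{2\ln(t+1)}$ and $\lambda(t)=\frac{t+1}{2\ln(t+1)}$ into \eqref{Theorem_sup_psi}, and your bookkeeping (the cancellation of the factor $2$ against $\lambda$, the replacement of $n+1$ by $n$ at cost $o(1/\ln(n+1))$, and the domination of $1/\lambda(n)$ by $\alpha(n)$) matches what is needed. Your explicit verification that $\psi\in\mathfrak{M}^{\alpha}$ is a detail the paper leaves implicit, but it is consistent with the paper's listing of \eqref{psi_2} among the admissible examples.
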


\begin{corollary}\label{cor5}
Нехай $\psi(k)=e^{- \frac{k+2}{\ln (k+2)}}$, $k=1,2,...,$ $\beta\in\mathbb{R}$ і $n\in\mathbb{N}$. Тоді для всіх $x\in\mathbb{R}$  при $n\rightarrow\infty$ має місце  асимптотична рівність
\begin{equation}\label{f20}
\tilde{\mathcal{E}}_{n}(C^{\psi}_{\beta,1};x) =
\frac{2}{\pi} \left| \sin \frac{2n-1}{2}x\right| \psi(n)\ln (n+2) \left( 1+\mathcal{O}(1)\frac{1}{\ln (n+2)} \right).
\end{equation}
\end{corollary}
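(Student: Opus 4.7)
The plan is to apply Theorem~\ref{theorem_sup_Interpolation} with the specific choice $\psi(t)=e^{-(t+2)/\ln(t+2)}$ and then carry out the asymptotic simplification. First I would verify the hypotheses of that theorem for this $\psi$. The function is positive, continuous on $[1,\infty)$, and tends to zero at infinity; one checks (as for the related examples \eqref{psi_1}--\eqref{psi_3}) that $\psi\in\mathfrak{M}$. Differentiating $\ln\psi(t) = -(t+2)/\ln(t+2)$ gives $\psi'(t)/\psi(t) = -(\ln(t+2)-1)/\ln^2(t+2)$, whence
$$\lambda(t) = \frac{\ln^2(t+2)}{\ln(t+2)-1}, \qquad \alpha(t) = \frac{\lambda(t)}{t} = \frac{\ln^2(t+2)}{t(\ln(t+2)-1)},$$
in agreement with the table preceding the corollary. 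Since $\lambda(t)\uparrow\infty$ and $\alpha(t)\downarrow 0$ as $t\to\infty$, conditions \eqref{alphaTo_0} and \eqref{lambdaTo_infty} hold, and the constraint $\alpha(n)\le 1/4$ is fulfilled for all sufficiently large $n$.

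Next I would invoke Theorem~\ref{theorem_sup_Interpolation} to write
$$\tilde{\mathcal{E}}_{n}(C^{\psi}_{\beta,1};x) = \frac{2}{\pi}\Bigl|\sin\tfrac{2n-1}{2}x\Bigr|\,\psi(n)\,\lambda(n)\,\Bigl(1 + \xi_1^{*}\alpha(n) + \xi_2^{*}/\lambda(n)\Bigr),$$
with $\xi_1^{*}, \xi_2^{*}$ bounded by the absolute constants supplied by that theorem. The remainder is asymptotic bookkeeping: I rewrite
$$\lambda(n) = \ln(n+2)\cdot\Bigl(1+\tfrac{1}{\ln(n+2)-1}\Bigr) = \ln(n+2)\bigl(1+\mathcal{O}(1/\ln(n+2))\bigr),$$
so $1/\lambda(n) = \mathcal{O}(1/\ln(n+2))$, while $\alpha(n) \sim \ln(n+2)/n = o(1/\ln(n+2))$ because $\ln^{2}(n+2) = o(n)$. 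Hence the replacement $\lambda(n)\rightsquigarrow\ln(n+2)$ and the two correction terms $\xi_1^{*}\alpha(n)$, $\xi_2^{*}/\lambda(n)$ all contribute at order $\mathcal{O}(1/\ln(n+2))$ and can be folded into a single such factor, producing \eqref{f20}.

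The only point requiring a moment of care is confirming that $\alpha(n) = \mathcal{O}(1/\lambda(n))$, equivalently $\lambda(n)^{2}/n\to 0$; this reduces to $\ln^{2}(n+2)/n\to 0$, which is immediate. Thus the main obstacle is not analytical but purely computational: one has to make sure the various $\mathcal{O}(1/\ln(n+2))$ contributions collapse correctly and that no lower-order term from $\lambda(n)$ secretly dominates the $1/\lambda(n)$ correction. Once this is verified, the result follows directly from Theorem~\ref{theorem_sup_Interpolation}.
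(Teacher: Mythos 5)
Your proposal is correct and follows exactly the route the paper takes: the paper derives Corollary~\ref{cor5} by substituting the table entries $\alpha(t)=\frac{\ln^{2}(t+2)}{t(\ln(t+2)-1)}$ and $\lambda(t)=\frac{\ln^{2}(t+2)}{\ln(t+2)-1}$ into Theorem~\ref{theorem_sup_Interpolation} and simplifying. Your computation of $\lambda$ and $\alpha$, the verification of \eqref{alphaTo_0}, \eqref{lambdaTo_infty} and \eqref{alpha_1/4}, and the absorption of $\xi_1^{*}\alpha(n)$ and $\xi_2^{*}/\lambda(n)$ into a single $\mathcal{O}(1/\ln(n+2))$ term are all exactly the (unwritten) details behind the paper's one-line derivation.
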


У формулах \eqref{f13}-\eqref{f20} величини $\mathcal{O}(1)$ рівномірно обмежені по усіх розглядуваних параметрах.
  
Наведемо наслідки з теореми~\ref{sup_Interpolation_2023_theorem_GeneralCase} у випадку, коли $\psi(k)=e^{-\alpha k}$, $\alpha>0$, тобто коли класи $C^{\psi}_{\beta,1}$ є класами інтегралів Пуассона $C^{\alpha,1}_{\beta,1}$.

\begin{corollary}\label{cor6}
Нехай  $\alpha>0$ i $\beta\in\mathbb{R}$. Тоді при всіх  $x\in\mathbb{R}$ і довільних $n \in \mathbb{N}$ має місце рівність
\begin{equation}\label{cor6_equality}
\tilde{\mathcal{E}}_{n}(C^{\alpha,1}_{\beta,1};x)= 
\frac{2}{\pi} \left| \sin \frac{2n-1}{2}x\right|
e^{-\alpha n}   \Big(
\frac{1}{1-e^{-\alpha} }+\frac{\Theta}{n}   \frac{e^{-\alpha}}{(1-e^{-\alpha} )^{2}} \Big),
\end{equation}
де для величини $\Theta=\Theta(n;\alpha;\beta;x)$ виконуються нерівності $-(1+\pi)\leq\Theta\leq 1$.
\end{corollary}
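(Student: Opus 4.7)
The plan is to specialize Theorem~\ref{sup_Interpolation_2023_theorem_GeneralCase} to the choice $\psi(k)=e^{-\alpha k}$, which by the conventions introduced in the first section identifies $C^{\psi}_{\beta,1}$ with $C^{\alpha,1}_{\beta,1}$. First I would check that the hypotheses of Theorem~\ref{sup_Interpolation_2023_theorem_GeneralCase} are met: the sequence $\psi(k)=e^{-\alpha k}$ is positive, and $\sum_{k=1}^{\infty} k\,e^{-\alpha k}<\infty$ because $\alpha>0$. Consequently \eqref{sup_interpolation_generalCase_C^psi_Equality} applies for every $x\in\mathbb{R}$ and $n\in\mathbb{N}$, with some $\Theta=\Theta(n;\alpha;\beta;x)$ obeying the uniform bound $-(1+\pi)\le\Theta\le 1$.

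The rest of the argument is the evaluation of the two series that appear in \eqref{sup_interpolation_generalCase_C^psi_Equality}. Both are standard geometric sums; in fact they have already been carried out in the paper in \eqref{Sums_generalizedPoissonIntegrals_1} and \eqref{Sums_generalizedPoissonIntegrals_2}, yielding
\begin{equation*}
\sum_{k=n}^{\infty} e^{-\alpha k}=\frac{e^{-\alpha n}}{1-e^{-\alpha}}
\qquad\text{and}\qquad
\frac{1}{n}\sum_{k=1}^{\infty} k\,e^{-\alpha(k+n)}=\frac{e^{-\alpha n}}{n}\cdot\frac{e^{-\alpha}}{(1-e^{-\alpha})^{2}}.
\end{equation*}
Substituting these closed forms into \eqref{sup_interpolation_generalCase_C^psi_Equality} and factoring out $e^{-\alpha n}$ reproduces exactly the identity \eqref{cor6_equality}, with $\Theta$ inheriting its range of admissible values directly from Theorem~\ref{sup_Interpolation_2023_theorem_GeneralCase}.

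There is essentially no obstacle here: this corollary is a direct specialization of Theorem~\ref{sup_Interpolation_2023_theorem_GeneralCase}, and beyond the citation it only requires summing two geometric series that the paper has already handled while proving Theorem~\ref{theorem_p=1_r=1}. The only thing to be mildly careful about is that the bounds on $\Theta$ in \eqref{cor6_equality} coincide with those in \eqref{sup_interpolation_generalCase_C^psi_Equality}, so no rescaling of the residual parameter is needed when the closed forms are plugged in.
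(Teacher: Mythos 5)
Your proposal is correct and follows the same route as the paper: substitute $\psi(k)=e^{-\alpha k}$ into \eqref{sup_interpolation_generalCase_C^psi_Equality} of Theorem~\ref{sup_Interpolation_2023_theorem_GeneralCase} and evaluate the two geometric series, exactly as in \eqref{cor6_proof}. The only trivial slip is attributing the closed form of $\sum_{k=n}^{\infty}e^{-\alpha k}$ to \eqref{Sums_generalizedPoissonIntegrals_1}, which actually computes a different (double) sum; the formula you wrote is nonetheless the correct elementary geometric sum, so nothing is affected.
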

\begin{proof}
Покладемо у формулі
\eqref{sup_interpolation_generalCase_C^psi_Equality} теореми~\ref{sup_Interpolation_2023_theorem_GeneralCase} $\psi(k)=e^{-\alpha k}$, $\alpha>0$ і використаємо рівність \eqref{Sums_generalizedPoissonIntegrals_2}. Будемо мати оцінки
\begin{align}\label{cor6_proof}
\tilde{\mathcal{E}}_{n}(C^{\alpha,1}_{\beta,1};x)=& 
\frac{2}{\pi} \left| \sin \frac{2n-1}{2}x\right|
 \Big(\sum\limits_{k=n}^{\infty} e^{-\alpha k}+ \frac{\Theta}{n}\sum\limits_{k=1}^{\infty} ke^{-\alpha (k+n)} \Big)\notag \\
=&
\frac{2}{\pi} \left| \sin \frac{2n-1}{2}x\right|
   \Big(
\frac{e^{-\alpha n}}{1-e^{-\alpha} }+\frac{\Theta}{n}   \frac{e^{-\alpha(n+1)}}{(1-e^{-\alpha} )^{2}} \Big).
\end{align}
З \eqref{cor6_proof} випливає \eqref{cor6_equality}. Наслідок~\ref{cor6} доведено.
\end{proof}

Нехай, далі $\psi(k)$, що породжують класи $C^{\psi}_{\beta,1}$, задовольняють уомву $\mathcal{D}_{q}$ вигляду \eqref{DalamberCondition} при $q\in(0,1)$. Тоді із теореми~\ref{sup_Interpolation_2023_theorem_GeneralCase} одержуємо такий наслідок.

\begin{corollary}\label{cor7}
Нехай $\psi\in \mathcal{D}_{q}$, $q\in(0,1)$,  $\beta\in\mathbb{R}$. Тоді для всіх  $x\in\mathbb{R}$  при всіх $n \in \mathbb{N}$ таких, що задовольняють нерівність \eqref{Number_n_Dq}, має місце оцінка
\begin{equation}\label{cor7_equality}
\tilde{\mathcal{E}}_{n}(C^{\psi}_{\beta,1};x)= 
 \left| \sin \frac{2n-1}{2}x\right|
\psi(n)   \Big(
\frac{2}{\pi(1-q)} +  \mathcal{O}(1) \left( \frac{q}{n(1-q)^{2}}+ \frac{\varepsilon_{n}}{n(1-q)^{2}}   \right) \Big),
\end{equation}
в якій $\varepsilon_{n}$ означена формулою \eqref{Varepselon_n_Dq}, a $\mathcal{O}(1)$ --- рівномірно обмежена по всіх параметрах величина.
\end{corollary}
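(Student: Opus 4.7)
The plan is to deduce Corollary~\ref{cor7} by combining two ingredients already in place. The first is the exact supremum formula of Theorem~\ref{sup_Interpolation_2023_theorem_GeneralCase}, which, whenever $\sum_{k=1}^{\infty}k\psi(k)<\infty$, yields
\begin{equation*}
\tilde{\mathcal{E}}_{n}(C^{\psi}_{\beta,1};x) = \frac{2}{\pi}\left|\sin\tfrac{2n-1}{2}x\right|\Bigl(\sum_{k=n}^{\infty}\psi(k) + \frac{\Theta}{n}\sum_{k=1}^{\infty}k\psi(k+n)\Bigr), \quad |\Theta|\leq 1+\pi.
\end{equation*}
The second is the two-term asymptotic expansion \eqref{Formulas_Corrolary}, established in \cite[p.~557]{SerdyukStepanyuk_UMJ_1_2023} for $\psi\in\mathcal{D}_q$, $q\in(0,1)$, under hypothesis \eqref{Number_n_Dq}.

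First I would verify the applicability of Theorem~\ref{sup_Interpolation_2023_theorem_GeneralCase}. The assumption $\psi\in\mathcal{D}_q$ with $q\in(0,1)$, combined with the ratio test, forces geometric decay of $\psi(k)$, so $\sum_{k=1}^{\infty}k\psi(k)<\infty$ holds automatically and the exact sup-formula applies for every $n$ satisfying \eqref{Number_n_Dq} and every $x\in\mathbb{R}$.

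Next I would substitute the asymptotic \eqref{Formulas_Corrolary} directly into the parenthesised tail. Since $\Theta$ is uniformly bounded by $1+\pi$, the product $\Theta\cdot\mathcal{O}(1)$ remains $\mathcal{O}(1)$; after extracting the factor $\psi(n)$ outside the brackets and absorbing the constant $\frac{2}{\pi}$ into the main term $(1-q)^{-1}$, the right-hand side collapses to exactly the claim \eqref{cor7_equality}.

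No substantive obstacle is anticipated: the proof is a clean asymptotic substitution. The only bookkeeping point is to trace constants carefully when merging $\Theta$ with the $\mathcal{O}(1)$ of \eqref{Formulas_Corrolary}, so that both error pieces exit with the weights $(1-q)^{-2}$ and $n^{-1}$ as stated. This is essentially the same computation as in the proof of Corollary~\ref{Interpolation_2023_theorem_Dq}, but transferred from the Lebesgue-inequality setting to the sup-norm equality setting supplied by Theorem~\ref{sup_Interpolation_2023_theorem_GeneralCase}.
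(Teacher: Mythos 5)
Your proposal is correct and is essentially the paper's own proof, which consists of the single remark that \eqref{cor7_equality} follows from Theorem~\ref{sup_Interpolation_2023_theorem_GeneralCase} together with formula \eqref{Formulas_Corrolary}; your additional check that $\psi\in\mathcal{D}_q$ with $q\in(0,1)$ guarantees $\sum_{k\geq1}k\psi(k)<\infty$ is a correct point the paper leaves implicit. Note only that a literal substitution of \eqref{Formulas_Corrolary} yields the error term $\varepsilon_{n}/(1-q)^{2}$ (as in \eqref{interpolation_Dq_Equality}), not $\varepsilon_{n}/(n(1-q)^{2})$ as printed in \eqref{cor7_equality}, so the extra $n$ there appears to be a typo in the statement rather than something your argument needs to account for.
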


Оцінка \eqref{cor7_equality} є наслідком з теореми~\ref{sup_Interpolation_2023_theorem_GeneralCase} і рівності \eqref{Formulas_Corrolary}.

Як зазначалось в п.~\ref{Subsection_analytic_Section_corrolary_infinitelyDifferentiable}, прикладами ядер $\Psi_{\beta}(t)$ вигляду \eqref{kernelPsi}, коефіцієнти   $\psi(k)$ яких задовольняють умову $\mathcal{D}_{q}$, $q\in(0,1)$ є полігармонічні ядра Пуассона $P_{q,\beta}(\ell,t)$ вигляду \eqref{3Kern_PUAS_POLI}, ядра аналітичних функцій  $\mathcal{P}_{q,\beta}(t)$ вигляду \eqref{3Kern_PUAS_TEPL},  ядра Неймана $N_{q,\beta}(t)$ вигляду \eqref{3Kern_Neim} та ін. На підставі формули \eqref{cor7_equality} для класів згорток $C^{\psi}_{\beta,1}$, породжених уісма переліченими ядрами, можна записати асимптотичні рівності величин $\tilde{\mathcal{E}}_{n}(C^{\psi}_{\beta,1};x)$, $x\in\mathbb{R}$ при $n\rightarrow\infty$. Вперше асимптотичні рівності для величин виду \eqref{quantityInterpol} на таких класах були одержані в роботі \cite{Serdyuk2012}. 

Приклад функції $\psi(k)$ вигляду \eqref{psi_even_oddCase} показує, що теорема~\ref{sup_Interpolation_2023_theorem_GeneralCase} дозволяє знаходити сильну асимптотику величин $\tilde{\mathcal{E}}_{n}(C^{\psi}_{\beta,1};x)$, $x\in\mathbb{R}$  при $n\rightarrow\infty$ і для випадків, коли ні умова $\mathcal{D}_{q}$ вигляду \eqref{DalamberCondition}, ні умова Коші $\mathcal{C}_{q}$ вигляду \eqref{CauschiConditiion} не виконуються. В зазначеному випадку в силу \eqref{sup_interpolation_generalCase_C^psi_Equality}, \eqref{Sum_psi(k)_psi_even_oddCase} i \eqref{1/nSum_kpsi(k+n)_psi_even_oddCase} має місце асимптотична при $n\rightarrow \infty$ рівність
\begin{equation}\label{cor_exceptionCase}
\tilde{\mathcal{E}}_{n}(C^{\psi}_{\beta,1};x)= 
\frac{2}{\pi} \left| \sin \frac{2n-1}{2}x\right|
\frac{\mathrm{max} \left\{\psi(n), \psi(n+1) \right\} }{1-\left( \overline{\lim\limits_{k\rightarrow\infty}} (\psi(k))^{\frac{1}{k}}\right)^{2}}
\left(1+  \frac{\mathcal{O}(1)}{n  \left(1- \left( \overline{\lim\limits_{k\rightarrow\infty}} (\psi(k))^{\frac{1}{k}}\right)^{2} \right)} \right).
\end{equation}
Формула \eqref{cor_exceptionCase} є наслідком з рівності \eqref{sup_interpolation_generalCase_C^psi_Equality} та співвідношень \eqref{Sum_psi(k)_psi_even_oddCase}, \eqref{1/nSum_kpsi(k+n)_psi_even_oddCase} i  \eqref{Sum_psi(k)_psi_even_oddCase_LimitRelations}. Нехай далі $\psi(k)$ задовольняє умову $\mathcal{D}_{0}$ вигляду \eqref{D0Condition}. Для таких $\psi$ виконуються формули \eqref{interpolation_D0_Equality_ProofEq1} i \eqref{LimitRelations_Sum/psi} і тому із теореми~\ref{sup_Interpolation_2023_theorem_GeneralCase} одержуємо 
\begin{corollary}\label{cor8_D0}
Нехай $\psi\in \mathcal{D}_{0}$,   $\beta\in\mathbb{R}$ i $n \in \mathbb{N}$. Тоді при всіх  $x\in\mathbb{R}$   має місце асимптотична при $n\rightarrow\infty$ рівність
\begin{equation}\label{cor8D0_equality}
\tilde{\mathcal{E}}_{n}(C^{\psi}_{\beta,1};x)= 
\frac{2}{\pi} \left| \sin \frac{2n-1}{2}x\right|
  \Big(
\psi(n) + \frac{ \mathcal{O}(1)}{n} \sum\limits_{k=n+1}^{\infty}k \psi(k)\Big),
\end{equation}
в якій  $\mathcal{O}(1)$ рівномірно обмежена по всіх параметрах величина.
\end{corollary}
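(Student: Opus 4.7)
The plan is to obtain Corollary~\ref{cor8_D0} by directly specializing Theorem~\ref{sup_Interpolation_2023_theorem_GeneralCase} to the class $\mathcal{D}_{0}$ and then applying the purely algebraic re-indexing identity that was already established in the proof of Corollary~\ref{Interpolation_2023_theorem_D0}. No substantially new ingredient is needed; the work consists in assembling three facts already available in the paper.

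First I would invoke the exact equality \eqref{sup_interpolation_generalCase_C^psi_Equality} of Theorem~\ref{sup_Interpolation_2023_theorem_GeneralCase}, whose hypotheses $\sum_{k=1}^{\infty}k\psi(k)<\infty$, $\psi(k)\ge 0$ are certainly satisfied for any $\psi\in\mathcal{D}_{0}$ (by the root test applied to \eqref{D0Condition}). This gives, for every $x\in\mathbb{R}$ and every $n\in\mathbb{N}$,
\begin{equation*}
\tilde{\mathcal{E}}_{n}(C^{\psi}_{\beta,1};x)
=\frac{2}{\pi}\left|\sin\frac{2n-1}{2}x\right|
\left(\sum_{k=n}^{\infty}\psi(k)+\frac{\Theta}{n}\sum_{k=1}^{\infty}k\psi(k+n)\right),
\end{equation*}
with $\Theta=\Theta(n;\psi;\beta;x)$ satisfying $|\Theta|\le 1+\pi$. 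Thus $\Theta$ is an $\mathcal{O}(1)$ quantity uniform in all parameters, which is exactly the form needed.

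Next I would insert into the right-hand side the identity derived in formula \eqref{interpolation_D0_Equality_ProofEq1} of the proof of Corollary~\ref{Interpolation_2023_theorem_D0}. That identity is obtained purely by isolating the term $\psi(n)$ in $\sum_{k=n}^{\infty}\psi(k)$ and rewriting $\sum_{k=1}^{\infty}k\psi(k+n)=\sum_{k=n+1}^{\infty}(k-n)\psi(k)$, and does not require $\psi\in\mathcal{D}_{0}$. It yields
\begin{equation*}
\sum_{k=n}^{\infty}\psi(k)+\frac{\mathcal{O}(1)}{n}\sum_{k=1}^{\infty}k\psi(k+n)
=\psi(n)+\frac{\mathcal{O}(1)}{n}\sum_{k=n+1}^{\infty}k\psi(k),
\end{equation*}
which, combined with the previous display, is precisely \eqref{cor8D0_equality}.

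Finally I would remark that this equality is asymptotically meaningful, i.e. that the remainder is of smaller order than the main term $\psi(n)$, by appealing to the limit relation \eqref{LimitRelations_Sum/psi} established in \cite{SerdyukStepanyuk_UMJ_1_2023}: for $\psi\in\mathcal{D}_{0}$ one has $\tfrac{1}{n}\sum_{k=n+1}^{\infty}k\psi(k)=o(\psi(n))$. The only point requiring a bit of attention is verifying that the $\mathcal{O}(1)$ in the two steps remains uniform in $\beta$, $x$ and $n$; this is automatic here because $\Theta$ from Theorem~\ref{sup_Interpolation_2023_theorem_GeneralCase} is bounded by an absolute constant and the subsequent identity only manipulates positive terms, so no new dependence on parameters is introduced. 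There is no substantial obstacle — the corollary is essentially the sup-norm analogue, over the unit ball in $L_{1}$, of the pointwise asymptotic \eqref{interpolation_D0_Equality}, and the argument mirrors the latter almost verbatim.
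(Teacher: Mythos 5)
Your proposal is correct and follows essentially the same route as the paper: the text immediately preceding Corollary~\ref{cor8_D0} derives it exactly by combining the equality \eqref{sup_interpolation_generalCase_C^psi_Equality} of Theorem~\ref{sup_Interpolation_2023_theorem_GeneralCase} with the re-indexing identity \eqref{interpolation_D0_Equality_ProofEq1} and the limit relation \eqref{LimitRelations_Sum/psi}. The only cosmetic remark is that the convergence of $\sum_{k}k\psi(k)$ under \eqref{D0Condition} is most directly seen by the ratio test rather than the root test, but this does not affect the argument.
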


Зазначимо, що вперше асимптотичні рівності для величини $\tilde{\mathcal{E}}_{n}(C^{\psi}_{\beta,1};x)$ при $n\rightarrow\infty$ із більш тонкою оцінкою залишкового члена були одержані в \cite{Serdyuk2012}. Там же були записані асимптотичні рівності величин \eqref{quantityInterpol} і на класах узагальнених інтегралів Пуассона $C^{\alpha,r}_{\beta,1}$ при $\alpha>0$ i $r>1$.

Варто зауважити, що формула \eqref{sup_interpolation_generalCase_C^psi_Equality} іноді дозволяє записувати асимптотичні рівності для величин $\tilde{\mathcal{E}}_{n}(C^{\psi}_{\beta,1};x)$ для функцій $\psi$, які задовольняють умову \eqref{CauschiConditiion=0} i водночас не задовольняють умову $\mathcal{D}_{0}$.

Співставлення отриманих у даному розділі результатів для величин $\tilde{\mathcal{E}}_{n}(C^{\psi}_{\beta,1};x)$  --- точних верхніх меж відхилень інтерполяційних поліномів на класах $C^{\psi}_{\beta,1}$ із отриманими в \cite{SerdyukStepanyuk_UMJ_1_2023} результатами щодо аналогічних величин ${\mathcal{E}}_{n}(C^{\psi}_{\beta,1})$ 
для частинних сум Фур'є $S_{n-1}$ 
\begin{equation}\label{FourierApproximation}
\mathcal{E}_{n}(C^{\psi}_{\beta,1}) =
\sup\limits_{f\in C^{\psi}_{\beta,1}} |f(x) - S_{n-1}(f;x)|
\end{equation}
показує, що за умови \eqref{LimRelation} величини \eqref{quantityInterpol} i \eqref{FourierApproximation} пов'язані граничним співвідношенням
\begin{equation*}
\lim\limits_{n\rightarrow\infty}\frac{  \tilde{\mathcal{E}}_{n}(C^{\psi}_{\beta,1};x) }{ 2 \left| \sin \frac{2n-1}{2}x\right| \mathcal{E}_{n}(C^{\psi}_{\beta,1})}=1.
\end{equation*}

\vspace{4mm}

Дана робота частково підтримана грантом H2020-MSCA-RISE-2019, номер проєкту 873071 (SOMPATY: Spectral Optimization: From Mathematics to Physics and Advanced Technology), а також фондом Фольсквагена (VolkswagenStiftung),  програмою “From Modeling and Analysis to Approximation”.

\begin{enumerate}

\bibitem{Bernshteyn}
С. Н. Бернштейн, {\it  О тригонометрическом интерполировании по способу наименьших квадратов}, ДАН СССР, {\bf 4}, №1-2, 1--8 (1934).

\bibitem{Kol}
 A. Kolmogoroff, {\it Zur Gr\"{o}ssennordnung des Restgliedes
Fourierschen Reihen differenzierbarer Funktionen}, (in German) Ann. Math.(2),
{\bf 36},  №2,  521--526 (1935).

\bibitem{Korn}
{Н.П. Корнейчук}, {\it Точные константы в
теории приближения},   Наука, Москва,
(1987).

\bibitem{Nikolsky1940}
С. М. Никольский,  {\it  О некоторых методах приближения тригонометрическими суммами}, Изв. АН СССР. Сер. матем., {\bf 4}, №6, 509--520 (1940).

\bibitem{Nikolsky1945}
 С.М.  Никольский, {\it  Приближение периодических функций тригонометрическими полиномами}, Тр. Мат. ин-та
АН СССР, {\bf 15}, 3--75(1945).

\bibitem{Nikolsky 1946}
С. М. Никольский, {\it Приближение функций тригонометрическими полиномами в среднем},  Изв. АН СССР. Сер. матем.,  {\bf 10}, №3,  207--256 (1946).

\bibitem{L_Oskolkov_1986}
K. I. Oskolkov, {\it Inequalities of the "large size" type and applicatiojns to problems of trigonometric approximation}, Anal. Math., {\bf 12}, 143--166 (1986).
%{Осколков К. И.} 
%{\it К неравенству Лебега
%в равномерной метрике на множестве полной меры}, Матем. заметки, Т. 18,  C. 515--526, (1975).

\bibitem{Serdyuk_1998} {А.С. Сердюк \/}  {\it Наближення аналітичних функцій інтерполяційними тригонометричними поліномами в метриці L}, Крайові задачі для диференціальних рівнянь: Зб. наук. пр., Київ: Ін– математики НАН України,  {\bf 3},  240 -- 250 (1998).

\bibitem {SerdyukDopov1999}
А.С.  Сердюк, {\it Про асимптотично точні оцінки похибки наближення інтерполяційними тригонометричними поліномами функцій високої гладкості},  Доп. НАН України,  № 8, 29-33 (1999).

\bibitem{L_Serdyuk 2000 c}
 {Сердюк А.С. }   {\it  Наближення періодичних функцій високої
гладкості інтерполяційними тригонометричними поліномами в метриці
$L_1$}, Укр. мат. журн., {\bf 52}, №7,
 994--998 (2000).

\bibitem{L_Serdyuk 2001 nd}
 {Сердюк А.С. }  {\it  Наближення інтерполяційними тригонометричними поліномами
 нескінченно диференційовних періодичних функцій  в інтегральній метриці},  Укр. мат. журн., {\bf 53}, № 12, 
1654--1663 (2001).

\bibitem{L_Serdyuk_2002_a}
 {Сердюк А.С. }  {\it  Наближення періодичних аналітичних функцій
 інтерполяційними тригонометричними поліномами в метриці простору $L$}, Укр. мат. журн., {\bf 54}, № 5, 
692--700 (2002).

\bibitem{L_Serdyuk_2004_nd} {А.С. Сердюк \/}   {\it Наближення
нескінченно диференційовних періодичних функцій інтерполяційними
тригонометричними поліномами}, Укр. мат. журн., {\bf 56}, № 4,  495--505 (2004).

%\cite{Kol}, \cite{Nikolsky 1946}--\cite{Serdyuk_Stepaniuk2015}, \cite{Stepanets1}

%\bibitem {Stepanets1989N4} A.I. Stepanets, 
%On the Lebesgue inequality on classes of  $(\psi,\beta)$-differentiable functions, Ukr. Math. J. 41:4 435--443 (1989).

%\bibitem {StepanetsSerdyuk} A.I. Stepanets, A.S. Serdyuk, Lebesgue inequalities for Poisson integrals, Ukr. Math. J. 52:6, 798-808 (2000).

%\bibitem {SerdyukMusienko} A.S. Serdyuk, A.P. Musienko, The Lebesgue type inequalities for the de la Vall?e Poussin sums in approximation of Poisson integrals,
%Zb. Pr. Inst. Mat. NAN Ukr. 7:1, 298-316 (2010).

 \bibitem{Serdyuk2005}
А.С.  Сердюк, {\it Наближення класів аналітичних функцій сумами Фур'є в рівномірній метриці}, Укр. мат. журн.,  {\bf 57}, № 8. 1079--1096 (2005).
%A.S. Serdyuk, Approximation of classes of analytic functions by Fourier sums in uniform metric,  Ukr. Math. J. 57:8 (2005) 1275--1296.

\bibitem{Serdyuk2005Lp}
А.С. Сердюк, {\it  Наближення класів аналітичних функцій сумами Фур'є в метриці простору $L_p$},  Укр. мат. журн., {\bf 57}, № 10, 1395--1408 (2005).

\bibitem{Serdyuk2012}
А.С.  Сердюк, {\it Наближення інтерполяційними тригонометричними поліномами на класах періодичних аналітичних функцій}, Укр. мат. журн.,  {\bf 64}, №5,  698--712 (2012). 

%- 2012. - 64, № 5.

 %A.S. Serdyuk, {\it  Approximation by interpolation trigonometric polynomials on classes of periodic analytic functions}, Ukr. Math. J.,  {\bf 64}, №5,  797--815 (2012).

%\bibitem{Serdyuk2004}
%А.С.  Сердюк, {\it Наближення нескінченно диференційовних періодичних функцій інтерполяційними тригонометричними поліномами},  Укр. мат. журн., {\bf 56}, №4,   495--505 (2004).

%\bibitem{Stepanets_Serdyuk_Shydlich2007}
%О.І. Степанець, А.С. Сердюк, А.Л. Шидліч, {\it Про деякі нові критерії нескінченної диференційовності періодичних функцій}, Укр. мат. журн., {\bf 59}, №10, 1399--1409 (2007).

 \bibitem{SerdyukVoitovych2010}
А.С. Сердюк, В.А. Войтович, {\it   Наближення класів цілих функцій інтерполяційними аналогами сум Валле Пуссена},  Теорія наближення функцій та суміжні питання: Збірник праць Інституту математики НАН України, {\bf 7}, № 1,  274-297 (2010).

\bibitem{SerdyukSokolenko2016}
А.С. Сердюк, І.В. Соколенко,  {\it  Наближення класів класів $(\psi,\beta)$-диференційовних функцій інтерполяційними тригонометричними поліномами}, Диференціальні рівняння і суміжні питання аналізу: Збірник праць Інституту математики НАН України, {\bf  13}, №1, 289--299 (2016).

\bibitem{SerdyukSokolenko2017}
А.С. Сердюк, І.В. Соколенко,  {\it  Апроксимацiя класiв згорток перiодичних функцiй лiнiйними методами, побудованимина основi коефiцiєнтiв Фур’є-Лагранжа},  Аналіз та застосування: Збірник праць Інституту математики НАН України, {\bf  14}, №1, 238--248 (2017).

\bibitem{SerdyukSokolenko2019}
А.С. Сердюк, І.В. Соколенко,  {\it   Наближення інтерполяційними тригонометричними поліномами в метриках просторів $L_{p}$ на класах періодичних цілих функцій},  Укр. мат. журн., {\bf 71}, №2,    283 -- 292 (2019). 
% - 2019. - 71, № 2. - С. 283-292.

\bibitem{SerdyukSokolenkoMFAT2019}
A.S. Serdyuk, I.V.  Sokolenko, {\it  Approximation by Fourier sums in classes of differentiable functions with high exponents of smoothness}, Methods of Functional Analysis and Topology, {\bf 25}, № 4,  381--387 (2019).

%A. S. Serdyuk,  I. V. Sokolenko, {\it
%Approximation by interpolation trigonometric polynomials in the metrics of $L_p$ spaces on classes of periodic integer functions}, Ukr. Math. J. {\bf 71}, No 2,    283 -- 292 (2019). 

%S.M. Nikol’skii,
%Approximation of functions in the mean by trigonometrical polynomials, (in Russian)
%Izv. Akad. Nauk SSSR, Ser. Mat. 10  (1946) 207-256.

\bibitem{SerdyukSokolenko2022}
А.С. Сердюк, І.В. Соколенко,  {\it  Наближення сумами Фур’є на класах диференційовних у сенсі Вейля--Надя функцій із високим показником гладкості}
Укр. мат. журн., {\bf 74},  №5,   685--700 (2022).

\bibitem{Serdyuk_Stepaniuk2014}
{А.С. Сердюк, Т.А. Степанюк\/}, {\it  Оцінки найкращих наближень класів нескінченно диференційовних функцій
в рівномірній та інтегральній метриках },  Укр. мат. журн.,  {\bf 66}, №9,  1244--1256 (2014).

\bibitem{SerdyukStepanyuk2017}
А.С. Сердюк, Т.А. Степанюк, {\it Наближення класів узагальнених інтегралів Пуассона сумами Фур’є в метриках просторів $L_{s}$},   Укр. мат. журн., {\bf 69}, № 5, 695-704 (2017). 

\bibitem{SerdyukStepanyuk2018}
A.S. Serdyuk, T.A. Stepanyuk, {\it Uniform approximations by Fourier sums on  classes of generalized Poisson integrals}, Anal. Math., {\bf 45},  №1,  201--236 (2019).

\bibitem{SerdyukStepanyuk_UMJ_1_2023}
А.С. Сердюк, Т. А. Степанюк, {\it Рiвномiрнi наближення сумами Фур’є на множинах згорток перiодичних функцiй високої гладкостi}, Укр. мат. журн.,  {\bf 75}, № 4,  542--567 (2023).
%A.S. Serdyuk, T.A. Stepanyuk, {\it Uniform approximations by Fourier sums on the sets of convolutions of periodic functions of high smoothness}, arXiv:2301.02017v1 (2023).

\bibitem{SerdyukStepanyuk2023_UMJ_No7}
А.С. Сердюк, Т. А. Степанюк, {\it Наближення узагальнених інтегралів Пуассона інтерполяційними тригонометричними поліномами}, Укр. мат. журн.,  {\bf 75},
№7,  946--969 (2023).

\bibitem{Stepanets1986_1}{ А.И. Степанец, \/}  {\it Классификация периодических функций и скорость сходимости их рядов Фурье},   Изв.  АН СССР. Сер. мат., {\bf 50}, №1, 101--136 (1986).

\bibitem{Step monog 1987} { А.И. Степанец, \/} {\it Классификация и
приближение периодических функций},
 Наукова Думка, Киев  (1987).

\bibitem{Stepanets1}
{ А.И. Степанец, \/} {\it Методы теории
приближений}: В 2 ч.,  Пр. Iн-ту математики НАН України, Ін-т
математики НАН України, Київ, {\bf 40}, Ч. I 
(2002).

\bibitem{Stepanets2}
{ А.И.Степанец, \/} {\it Методы теории
приближений}: В 2 ч.,  Пр. Iн-ту математики НАН України, Ін-т
математики НАН України, Київ, {\bf 40}, Ч. ІI 
(2002).

\bibitem{StepanetsSerdyuk2000Zb}
О.І. Степанець, А.С. Сердюк, {\it   Оцінка залишку наближення інтерполяційними тригонометричними многочленами на класах нескінченно диференційовних функцій},  Теорія наближення функцій та її застосування: Праці Ін-ту математики НАН України, {\bf 31},  446--460 (2000).

\bibitem{StepanetsSerdyuk2000}
А.И. Степанец,  А.С. Сердюк, {\it Приближение периодических аналитических функций интерполяционными тригонометрическими многочленами}, Укр. мат. журн., {\bf 59}, №12, 1689--1701 (2000).

\bibitem{StepanetsSerdyuk2000No3} 
А.И. Степанец, А.С. Сердюк {\it Приближение суммами Фурье и наилучшие приближения на классах аналитических функций},  Укр. мат. журн., {\bf 52}, №3, 375--395 (2000).

\bibitem{StepanetsSerdyuk2000No6} 
А.И. Степанец, А.C. Сердюк, {\it Неравенства Лебега для интегралов Пуассона}, Укр. мат. журн., {\bf 52}, № 6, 798-808 (2000).

\bibitem{Stepanets_Serdyuk_Shydlich2007}
О.І. Степанець, А.С. Сердюк, А.Л. Шидліч {\it Про деякі нові критерії нескінченної диференційовності періодичних функцій}, Укр. мат. журн., {\bf 59}, №10, 1399--1409 (2007)

\bibitem {Stepanets_Serdyuk_Shydlich2008}
А.И. Степанец, А.С. Сердюк, А.Л. Шидлич, {\it Классификация бесконечно дифференцируемых периодических функций}, Укр. мат. журн.,  {\bf 60}, №12,  1686-1708 (2008).

\bibitem {Stepanets_Serdyuk_Shydlich2009}
А.И. Степанец, А.С. Сердюк, А.Л. Шидлич, {\it О связи классов $(\psi, \overline{\beta})$-дифференцируемых функций с классами Жевре}, Укр. мат. журн.,
 {\bf 61}, №1,  140--145 (2009).

\bibitem{Stechkin 1980}
С. Б. Стечкин {\it Оценка остатка ряда Фурье для дифференцируемых функций,}  Приближение функций полиномами и сплайнами, Сборник статей, Тр. МИАН СССР, {\bf 145}, 126--151 (1980).

\bibitem{Teljakovsky1989}
 С.А. Теляковский, {\it  О приближении суммами Фурье функций высокой
гладкости}, Укр. мат. журн., {\bf 41},
№ 4,  510--518 (1989).

\bibitem{L_Sharapudinov}
{$\breve{S}$arapudinov I.I. \/}  {\it On the best  approximation and
polynomials  of the least quadratic deviation},
Anal. Math., {\bf 9}, 223--234 (1983).

\end{enumerate}

\end{document}